\documentclass[11pt]{article}

\usepackage{amssymb, amsmath, amsthm}
\usepackage[colorlinks]{hyperref}

%\usepackage{mathtools}
%\mathtoolsset{showonlyrefs}

% \usepackage{showlabels}
\usepackage{comment}

\setlength{\oddsidemargin}{0.5cm}
\setlength{\textwidth}{15.5cm}
\setlength{\topmargin}{-0.65cm}
\setlength{\textheight}{21cm}

\theoremstyle{plain}
\newtheorem{lemma}{Lemma}
\newtheorem{theorem}{Theorem}

\newtheorem{corollary}{Corollary}

\theoremstyle{remark}
\newtheorem{remark}{Remark}

\newcommand{\R}{{\mathbb R}}

\newcommand{\Z}{{\mathbb Z}}
\newcommand{\N}{{\mathbb N}}
\renewcommand{\P}{{\mathbb P}}
\newcommand{\ii}{\mathfrak{i}}
\newcommand{\dd}{\mathrm{d}}
\newcommand{\E}{\mathbb{E}}

\newcommand\ind[1]{\mathbb{I}{\left\{#1\right\}}} 
\newcommand{\sgn}{\mathrm{sgn}}

\def\eps{\varepsilon}

\title{Strong renewal theorem and local limit theorem \\
in the absence of regular variation}

\author{P\'eter Kevei\thanks{Bolyai Institute, University of Szeged,
Aradi v\'ertan\'uk tere 1, 6720 Szeged, Hungary; 
e-mail: \texttt{kevei@math.u-szeged.hu}}\quad and\quad
Dalia Terhesiu\thanks{Mathematisch Instituut,
University of Leiden, Niels Bohrweg 1, 2333 CA Leiden, Netherlands;
e-mail: \texttt{daliaterhesiu@gmail.com} }}

\date{}

\begin{document}

\maketitle

\begin{abstract}
We obtain a strong renewal theorem with infinite mean beyond regular variation,
when the underlying distribution belongs to the domain of geometric
partial attraction a semistable law with index $\alpha\in (1/2,1]$.
In the process we obtain local limit theorems  for both finite and infinite mean, that is 
for the whole range 
$\alpha\in (0,2)$. We also derive the asymptotics of the renewal function for $\alpha\in 
(0,1]$.
\end{abstract}

\section{Introduction}

Strong renewal theorems (SRT) with infinite mean that have regularly varying (with 
parameter $\alpha\in [0,1]$)
underlying renewal distributions are nowadays completely understood. 
The SRT in the  one-sided lattice case with $\alpha\in (1/2, 1)$ has been
obtained by Garsia and Lamperti~\cite{GL} and it was  later generalized to the 
nonarithmetic case by Erickson \cite{Erickson70}.
The latter also treats the case $\alpha=1$. As noted in~\cite{GL}, the mere regular 
variation is insufficient in the  
range $\alpha\in (0, 1/2)$. The problem of finding necessary and sufficient conditions 
has 
recently been solved by
Caravenna and Doney~\cite{CaravennaDoney} directly in the two-sided case. 
For more information on improved sufficient conditions for this problematic range we 
refer 
to~\cite{CaravennaDoney}. For a complete treatment of the two-sided $\alpha=1$ case we 
refer to Berger~\cite{Berger}. Very recently, Uchiyama \cite{Uchi} obtained 
asymptotic results for the renewal function for relatively stable variables with
infinite mean. A nonnegative random variable is relatively stable if and only if
its truncated mean is slowly varying. This roughly corresponds to the case 
$\alpha = 1$, but the tail is not necessarily regularly varying. To the best 
of our knowledge, Uchiyama's paper is the only one where the infinite mean 
case in the absence of regular variation is treated.
We also remark that  renewal theory with no moments 
(roughly, the $\alpha=0$ case)
has been dealt with in \cite{AlexanderBerger}.

In this paper we are interested in SRT with infinite mean beyond regular variation. More 
precisely, we focus on distributions in the domain of geometric partial 
attraction of a semistable law. The class of semistable laws,
introduced by Paul L\'evy, is a natural 
extension of stable laws. They are the limits of 
appropriately centered and normed sums
of iid random variables along geometrically increasing subsequences.
Analytically, the tail of the L\'evy measure of the non-Gaussian stable laws are
$x^{-\alpha}$, for some $\alpha \in (0,2)$, while for semistable laws
an additional logarithmically periodic factor appears.
The same logarithmically periodic function appears in the characterization
of the domain of geometric partial attraction. 
A brief background on semistable laws is 
provided in Section~\ref{sec:semistable}.
For definitions, properties, and history of semistable laws 
we refer to Sato \cite[Chapter 13]{Sato},  Megyesi 
\cite{M}, Cs\"org\H{o} and Megyesi \cite{CsM}, and the references therein.  

Our main results on SRT for the case of one-sided $\alpha\in (1/2, 1)$ semistable renewal 
distributions
are Theorem~\ref{thm:SRTGL} (arithmetic case) and  Theorem~\ref{thm:SRTEr} 
(nonarithmetic and nonlattice cases). Unlike in~\cite{GL} and \cite{Erickson70}, 
we cannot use the precise
asymptotic of  the characteristic function. Although the  characteristic function 
asymptotic in Theorem~\ref{thm:phi-asy-gen} is an important ingredient of our 
proofs, the strategy is the systematic use of local limit theorems (LLT).
The LLTs for semistable laws that we obtain here for both finite and infinite mean, that 
is for the whole range 
$\alpha\in (0,2)$, are new. These are 
Theorem~\ref{thm:local-sst} (lattice case) and  
Theorem~\ref{thm:llt-stone} (nonlattice case).

Note that concerning LLT lattice and nonlattice distributions have to be treated
separately, while concerning renewal theorems arithmetic and nonarithmetic
distributions are different. Our proof of the SRT relies on the LLT.
For arithmetic distributions we use the lattice LLT, while for 
nonlattice distributions we use the nonlattice LLT. In the proof of the 
remaining case for nonarithmetic lattice distributions, we use the lattice 
LLT together with the fact that the irrational rotation is uniquely
ergodic, therefore it smooths out the mass at infinity.
In particular, our proof in the nonarithmetic case is different from 
Erickson's \cite{Erickson70} method.

As clarified in~\cite[Section 4.1]{CaravennaDoney} via probabilistic arguments, local 
limit results (namely, LLT and Local Large Deviation) are sufficient to prove SRT for the 
regularly varying case in range $\alpha\in (1/2,1)$.
An analytic proof of this fact is 
absent in the literature.
Our proof of Theorem~\ref{thm:SRTGL} does precisely this while answering the current 
question on SRT in the semistable setting. In the process we show that the proofs 
in \cite{GL} and \cite{Erickson70} can be written using
just the LLT together with a `rough' asymptotics of the characteristic function.

While the characteristic function asymptotics for $\alpha=1$ 
in Theorem~\ref{thm:phi-asy-gen},
are considerably more difficult than for the range $\alpha\in (0, 1)$, 
the proof of the SRT (Theorem \ref{thm:srt-a1}) is in fact simpler, 
and was obtained in a more general setup in \cite{Uchi}.

In Theorem~\ref{thm:Uasy-sem} we obtain the asymptotics of the renewal function for  
$\alpha \in (0,1]$ semistable renewal distributions.
Previous similar, partial results are obtained in Kevei~\cite[Theorem 2.1]{K} and in 
the authors' previous paper \cite[Theorem 2]{KT},
which provide a Karamata type theorem in the absence of strict regular variation. 
The basic observation used in the proof of Theorem~\ref{thm:Uasy-sem} is that the  
semistable limit theorem obtained in \cite{CsM}
in terms of characteristic functions  (but not LLT)  
together with an inversion formula can be used to obtain 
the asymptotics of the renewal function.  
This type of argument is not needed (although it makes sense)  
in the regular variation setting because the Karamata Tauberian
theorem gives the desired result.

All the proofs are gathered together in Section \ref{sect:proofs}.

\section{Characteristic function asymptotics}
\label{sec:char}

Let $X$ be a random variable with distribution 
function $F(x) = \P(X \leq x)$.
Put $\overline F(x) = 1 - F(x)$.
For $r > 1$ introduce the set of logarithmically 
periodic functions
\[
\begin{split}
\mathcal{P}_{r} = \Big\{  p: (0,\infty) \to (0,\infty)  : & \,
\inf_{x \in [1,r]} p(x) > 0, \ 
p \text{ is bounded,} \\ % } \\ %p \not\equiv 0, 
& \text{ right-continuous, and }  p(x r) = p(x), \ 
\forall x >0\Big\}.
\end{split}
\]
Assume that for some $r > 1$, $\alpha \in (0,1)$, and a 
slowly varying function $\ell$
\begin{equation} \label{eq:def-rlp}
\lim_{n \to \infty} \frac{(r^n z)^\alpha}{\ell(r^n)} \overline F (r^n z)
= p_0 (z), \quad z \in C_{p_0},
\end{equation}
where the limit $p_0$ is not identically 0.
Then the appearing function $p_0$ is necessarily log-periodic, 
i.e.~$p_0(r x) = p_0(x)$, and since $F$ is monotone, $p_0(x) x^{-\alpha}$
is nonincreasing. Then $\overline F$ is 
called \emph{regularly log-periodic}. 
A stronger assumption is
\begin{equation*} %\label{eq:def-rlp2}
\overline F(x) = \ell(x) x^{-\alpha} p_0(x),
\quad \text{ with } p_0 \in \mathcal{P}_r,
\end{equation*}
which follows from \eqref{eq:def-rlp} if $p_0$ is continuous.

Let $U(x) = \sum_{n=0}^\infty F^{*n}(x)$ be the corresponding 
renewal function, where $*n$ stands for the usual convolution power.
If \eqref{eq:def-rlp} holds then
a slight generalization of~\cite[Theorem 2]{KT}  (with the identical proof)
shows that
\[
\lim_{n \to \infty} \frac{U(r^n z) \ell(r^n)}{(r^n z)^\alpha} = p_1(z),
\]
where $p_1$ can be determined explicitly, see \cite[Theorem 2]{KT}.

For finer results we first need the asymptotic behavior of the characteristic function
of $X$. In what follows, oscillatory integrals appear naturally. The notation
$\int_0^{\infty-}$ means that the integral is understood as improper Riemann
integral, and not as Lebesgue integral on $[0,\infty)$.

Assume that
\begin{equation} \label{eq:df-ass}
\begin{split}
& \overline F(x) = \frac{\ell(x)}{x^\alpha} h(x), \\
& F(-x) = \frac{\ell(x)}{x^\alpha} k(x), \ x > 0,
\end{split}
\end{equation}
where $\alpha \in (0,2)$, the function $\ell$ is a slowly varying, and
$h$ and $k$ are either identically 0, or positive bounded functions
with strictly positive infimum, and at least
one of them is not identically zero. Let
\[
\varphi(t) = \E e^{\ii t X} = \int_{\R} e^{\ii t x } \dd F(x).
\]
We write $\Re$ for the real part and $\Im$ for the imaginary part.

\begin{theorem} \label{thm:phi-asy-gen}
Assume that \eqref{eq:df-ass} holds. If $\alpha \in (0,1)$ then
\[
\limsup_{t \to 0}
\frac{|1 - \varphi(t)|}{|t|^\alpha \ell(1/|t|)} < \infty.
\]
Furthermore, if $h(x)x^{-\alpha}$ and $k(x) x^{-\alpha}$ 
in \eqref{eq:df-ass} are ultimately nonincreasing then as $t \to 0$
\[
1 - \varphi(t)  \sim - \ii \sgn(t) \, |t|^\alpha 
\ell(1/|t|) p_2(t),
\]
where 
\[
p_2(t) = 
\int_0^{\infty-} y^{-\alpha} \left[ 
h(y/|t|)  e^{\ii y \sgn(t)} - k(y/|t|) e^{-\ii y \sgn(t)} \right] \dd y.
\]

If $\alpha \in (1,2)$ then as $t \to 0$
\[
1 + \ii t \E X - \varphi(t) \sim 
-\ii \sgn(t) \, |t|^\alpha \ell(1/|t|) p_2(t),
\]
where 
\[
p_2(t) = 
\int_0^{\infty} y^{-\alpha} \left[ 
h(y/|t|)  (e^{\ii y \sgn(t)}-1) - k(y/|t|) 
(e^{-\ii y \sgn(t)}-1) \right] \dd y.
\]
% Moreover, $\liminf_{x \to \infty} \Im p_2(x) > 0$
% holds for $\alpha \in (0,1) \cup (1,2)$.

If $\alpha = 1$
\[
\limsup_{t \to 0}
\frac{\Re (1 - \varphi(t))}{|t| \ell(1/|t|)} < \infty
\ \text{ and } \ 
\limsup_{t \to 0}
\frac{| \Im \varphi(t) | }{|t| L(1/|t|)} < \infty,
\]
where
\[
L(x) = \int_1^x \left[ \overline F(u) + F(-u) \right] \, \dd u
\]
is a slowly varying function such that $L(x) / \ell(x) \to \infty$
as $x \to \infty$. In the one-sided case, i.e.~if $k \equiv 0$ then
\[
| \Im \varphi(t) | \sim |t| L(1/|t|),
\]
also holds.
Furthermore if $h(x)/x$ and $k(x)/x$ are ultimately nonincreasing
then
\[
\Re (1 - \varphi(t) ) \sim |t| \ell(1/|t|) 
\int_0^{\infty-}
\frac{\sin y}{y} \left( h(y/|t|) + k(y/|t|) \right) \dd y.
\]

Finally, for any $\alpha \in (0,2)$
\begin{equation*} %\label{eq:Re>0} 
\liminf_{t \to 0}
\frac{\Re (1 - \varphi(t))}{|t|^\alpha \ell(1/|t|)} > 0.
\end{equation*}
\end{theorem}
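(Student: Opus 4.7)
The plan is to obtain the lower bound by combining a truncated-second-moment inequality with a Chebyshev-style estimate driven by the two-sided tail bound in \eqref{eq:df-ass}. The starting point is the elementary inequality $1-\cos u \geq (2/\pi^2)u^2$ for $|u|\leq\pi$; restricting $\Re(1-\varphi(t)) = \int_\R(1-\cos(tx))\,\dd F(x)$ to $\{|x|\leq \pi/|t|\}$ gives
\[
\Re(1-\varphi(t)) \geq \frac{2t^2}{\pi^2}\, V(\pi/|t|), \qquad V(y) := \int_{|x|\leq y} x^2\,\dd F(x).
\]

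For any constant $C>1$, a Chebyshev-type restriction to the annulus $y/C < |x| \leq y$ yields
\[
V(y) \geq (y/C)^2\, \P(y/C < |X| \leq y) = (y/C)^2\bigl(G(y/C) - G(y)\bigr),
\]
where $G(y) := \overline F(y) + F(-y) = \ell(y)y^{-\alpha}(h(y)+k(y))$ is the tail of $|X|$. Writing $c_1 := \inf(h+k)>0$ and $c_2 := \sup(h+k)<\infty$ (both finite and positive by the hypothesis), I would estimate
\[
G(y/C) - G(y) \geq y^{-\alpha}\ell(y)\left[c_1 C^\alpha \frac{\ell(y/C)}{\ell(y)} - c_2\right].
\]
The decisive step is to choose $C$ large enough that $c_1 C^\alpha > c_2$, say $C = (2c_2/c_1)^{1/\alpha}$. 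Since $\ell$ is slowly varying, $\ell(y/C)/\ell(y) \to 1$ as $y \to \infty$, so the bracketed quantity is bounded below by a positive constant for all large $y$. Hence $V(y) \geq c\, y^{2-\alpha}\ell(y)$ for $y$ large.

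Substituting $y = \pi/|t|$ back into the first inequality,
\[
\Re(1-\varphi(t)) \geq \frac{2t^2}{\pi^2}\, V(\pi/|t|) \geq c'\, |t|^\alpha\, \ell(\pi/|t|) \sim c'\, |t|^\alpha\, \ell(1/|t|)
\]
as $t \to 0$, which gives the claimed positive liminf. The main subtlety is the choice of $C$: a Chebyshev bound with a fixed $C$ (say $C=2$) can be vacuous when $c_2/c_1$ is large, and the freedom to take $C$ as large as needed, together with $\alpha>0$, is what allows the two-sided tail bound to dominate. Notably, no monotonicity assumption on $h(x)x^{-\alpha}$ or $k(x)x^{-\alpha}$ is used.
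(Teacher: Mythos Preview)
Your argument for the strict positivity of the real part is correct and is essentially the same as the paper's: both restrict $1-\cos(tx)\geq c(tx)^2$ to an annulus $[a_0/|t|,\pi/|t|]$ (your $a_0=\pi/C$), lower-bound the probability of that annulus via the two-sided tail assumption, and then use slow variation of $\ell$ together with the freedom to take $a_0$ small (equivalently $C$ large) to force the bracket positive. The only cosmetic differences are that you route the estimate through the truncated second moment $V(y)$ and treat both tails symmetrically via $G=\overline F+F(-\cdot)$, whereas the paper writes the bound in one step using only the right tail $\overline F$ (which, strictly speaking, needs $h\not\equiv 0$; your version covers $h\equiv 0$ as well without appealing to symmetry).
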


\begin{remark}
For $\alpha \in (0,1)$
some monotonicity conditions are needed for the finiteness of the improper
integral in $p_2$. Indeed, it is easy to construct examples such that
$\int_0^{\infty -} \ell(x) x^{-\alpha} \cos x \, \dd x$ does not exist
and $\lim_{x \to \infty} \ell(x) = 1$. On the other hand, for $\alpha > 1$
the function $p_2$ is defined as a Lebesgue integral.

We note that the $\alpha = 1$ case is more complicated, as usual.
The main difficulty is that the order of the real and imaginary parts
are different and in general, the imaginary part is larger. However,
for symmetric distributions the imaginary part disappears.
For a treatment of $\alpha = 1$ in the regular variation case we refer to~\cite{AD98}.
See also Lemma 2 by Erickson \cite{Erickson70}, or Pitman \cite{pitman}.
For the corresponding result in the regularly varying case see 
Theorem 2.6.5 in Ibragimov and Linnik \cite{IL}, 
for results on more general integral transform see also 
Theorem 4.1.5 in Bingham et al.~\cite{BGT}.
\end{remark}

Let $X$ be a random variable with distribution
function $F$. Assume that
\begin{equation} \label{eq:def-rlp2-gen}
\begin{gathered}
\overline F(x) = \ell(x) x^{-\alpha} p_R(x), 
\ F(-x) = \widetilde \ell(x) x^{-\alpha} p_L(x), 
\ \ell(x) \sim \widetilde \ell(x), \\
\ell, \widetilde \ell \ \text{slowly varying}, \ 
\alpha \in (0,2), \ p_R, p_L \in \mathcal{P}_r \cup \{ 0 \},
\ p_L + p_R \neq 0.
\end{gathered}
\end{equation}
Notice that, due to the logarithmic periodicity of $p_R$ and $p_L$ the functions
$p_R(x) x^{-\alpha}$ and $p_L(x) x^{-\alpha}$ are both nonincreasing. Therefore
the following is an immediate consequence of Theorem \ref{thm:phi-asy-gen}.

\begin{corollary} \label{cor:phi-asy}
Assume that \eqref{eq:def-rlp2-gen} holds, and if $\E |X| < \infty$ then
$\E X = 0$. Then, for $\alpha \neq 1$, as $t \to 0$
\[
{1 - \varphi(t)} \sim 
- \ii \sgn(t) \, |t|^\alpha \ell(1/|t|) p_2(t), 
\]
where 
\[
p_2(t) = 
\begin{cases}
\int_0^{\infty-} y^{-\alpha} \left[ 
p_R( \frac{y}{|t|} )  e^{\ii y \sgn(t)} - 
p_L( \frac{y}{|t|} ) e^{-\ii y  \sgn(t)} \right]
\dd y, & \alpha < 1, \vspace{3pt} \\
\int_0^\infty 
y^{-\alpha} \left[ 
p_R(\frac{y}{|t|})  (e^{\ii y \sgn(t) } -1)- 
p_L(\frac{y}{|t|}) (e^{-\ii y \sgn(t) } -1) \right] \dd y, & 
\alpha > 1.
\end{cases}
\]
While for $\alpha = 1$
\[
\Re (1 - \varphi(t)) \sim |t| \ell(1/|t|)
\int_0^\infty \frac{\sin y}{y} ( 
p_R(y/|t|) + p_L(y/|t|) ) \,  \dd y.
\]
\end{corollary}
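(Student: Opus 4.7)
The plan is to deduce Corollary~\ref{cor:phi-asy} as a direct application of Theorem~\ref{thm:phi-asy-gen}, by casting the hypothesis \eqref{eq:def-rlp2-gen} in the form \eqref{eq:df-ass} and checking the monotonicity condition that gives the sharp asymptotic.

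First, I would set $h(x) := p_R(x)$ and $k(x) := p_L(x)\,\widetilde\ell(x)/\ell(x)$, so that both tails share the common prefactor $\ell(x)/x^\alpha$. Since $\widetilde\ell \sim \ell$ and $p_R, p_L \in \mathcal{P}_r \cup \{0\}$, each of $h$ and $k$ is, for $x$ sufficiently large, either identically zero or positive, bounded, and bounded away from zero; the assumption $p_L + p_R \not\equiv 0$ ensures at least one is nontrivial. Thus \eqref{eq:df-ass} holds eventually, which is all Theorem~\ref{thm:phi-asy-gen} requires.

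Next, I would verify the monotonicity hypothesis on $h(x)x^{-\alpha}$ and $k(x)x^{-\alpha}$ (respectively $h(x)/x$ and $k(x)/x$ when $\alpha = 1$). This is the observation made just before the corollary: the log-periodicity $p_R(rx) = p_R(x)$ combined with the monotonicity of $\overline F$ forces $p_R(x) x^{-\alpha}$ to be ultimately nonincreasing. Concretely, on any geometric block $[x, rx]$ with $x$ large, $\ell$ oscillates only by a factor $1 + o(1)$, so monotonicity of $\overline F(x) = \ell(x) x^{-\alpha} p_R(x)$ transfers to $p_R(x) x^{-\alpha}$ up to a vanishing error, and iterating across successive blocks $[r^n x, r^{n+1} x]$ together with periodicity yields the desired monotonicity; the argument for $p_L$ is identical. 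This is the only step that is not a purely notational reduction, and it is where I expect the main (albeit modest) obstacle to lie.

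Finally, I would read off the three cases directly from Theorem~\ref{thm:phi-asy-gen}. For $\alpha \in (0,1)$ the formula for $p_2$ is exactly the one in the theorem with $h = p_R$ and $k(y/|t|) = p_L(y/|t|)(1 + o(1))$, the $\widetilde\ell/\ell$ factor being absorbed into the $\sim$ relation. For $\alpha \in (1,2)$, the centering assumption $\E X = 0$ cancels the $-\ii t \E X$ term and produces the stated asymptotic. For $\alpha = 1$, I would substitute $h = p_R$ and $k = p_L(1 + o(1))$ into the real-part statement of Theorem~\ref{thm:phi-asy-gen}, using again the monotonicity established in the previous paragraph. No further work is needed.
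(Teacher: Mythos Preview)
Your proposal is correct and matches the paper's approach: the paper states the corollary as ``an immediate consequence of Theorem~\ref{thm:phi-asy-gen}'' after observing, just above the statement, that log-periodicity of $p_R,p_L$ (together with monotonicity of $\overline F$ and $F(-\cdot)$) forces $p_R(x)x^{-\alpha}$ and $p_L(x)x^{-\alpha}$ to be nonincreasing---precisely the verification you outline. One small cleanup: rather than setting $k=p_L\widetilde\ell/\ell$ and then worrying about monotonicity of $k(x)x^{-\alpha}$ (which the $\widetilde\ell/\ell$ factor could spoil), it is tidier to apply the right- and left-tail computations of Theorem~\ref{thm:phi-asy-gen} separately with $(\ell,p_R)$ and $(\widetilde\ell,p_L)$, so that the needed monotonicity is that of $p_L(x)x^{-\alpha}$ itself, and only then invoke $\widetilde\ell\sim\ell$ to merge the prefactors.
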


\section{Semistable laws}
\label{sec:semistable}

Semistable laws are limits of centered and normed sums of iid random variables along 
subsequences $k_n$ for which 
\begin{equation} \label{eq:H} 
k_{n}<k_{n+1} \ \text{for } n\geq 1 \ \text{and } 
\lim_{n\to\infty}\frac{k_{n+1}}{k_n}=c > 1
\end{equation}
hold. Since $c=1$ corresponds to the stable case (\cite[Theorem 2]{M}), we assume 
that $c > 1$. 
In what follows we let $c$ be as defined in \eqref{eq:H}.

The characteristic function of a non-Gaussian semistable random variable
$V$ has the form
\begin{equation} \label{eq:def-sscharf} 
\psi(t) =
\E e^{\ii t V} =
\exp \left\{ \ii t a + 
\int_{-\infty}^\infty (e^{\ii t x } - 1 - \ii t x \ind{|x| \leq 1} ) 
\, \Lambda( \dd x ) \right\}, 
\end{equation}
with $\ind{\cdot}$ standing for the indicator function, 
where $a \in \R$, and for
the L\'evy measure $\Lambda$, we have
$\Lambda((x, \infty)) = M_R(x) x^{-\alpha}$, 
$\Lambda((-\infty, -x)) = M_L(x) x^{-\alpha}$, where
$M_R, M_L \in \mathcal{P}_{c^{1/\alpha}} \cup \{ 0 \}$, such that 
not both of them are 0. We further assume that $V$ is nonstable, 
that is either $M_R$ or $M_L$ is not constant.

In the following $X, X_1, X_2, \ldots $ are iid random variables with distribution 
function $F(x) = \P ( X \leq x )$. Let $S_n = X_1 + \ldots +X_n$ denote the 
partial sum.
We fix a semistable random variable $V=V(R, M)$ with distribution function $G$ and
characteristic function $\psi$ in \eqref{eq:def-sscharf}.
The random variable $X$ belongs to the domain of 
geometric partial attraction of the semistable law $G$ if there is a 
subsequence $k_n$ for which \eqref{eq:H} holds, and a norming and  a centering sequence 
$A_n, C_n$, such 
that
\begin{equation} \label{eq:dgp-conv}
\frac{\sum_{i=1}^{k_n} X_i  - C_{k_n}}{A_{k_n}} \rightarrow^d V,
\end{equation}
where $\rightarrow^d$ means convergence in distribution.
By \cite[Theorem 3]{M}, without loss of generality we may assume that 
\begin{equation} \label{eq:An}
A_n = n^{1/\alpha} \ell_1(n), \quad 
C_n = n \int_{1/n}^{1-1/n} Q(s) \, \dd s,
\end{equation}
with some slowly varying function $\ell_1$,
where $Q(s) = \inf \{ x: \, F(x) \geq s \}$, $s \in (0,1)$ is the
quantile function of $F$.

In order to characterize the domain of geometric partial attraction
we need some further 
definitions. As $k_{n+1} / k_n \to c > 1$, for any 
$x$ large enough there is a unique $k_n$ such that 
$A_{k_n} \leq x < A_{k_{n+1}}$. Define
\begin{equation*} %\label{eq:delta}
\delta(x) = \frac{x}{A_{k_n}}.
\end{equation*}
Note that the definition of $\delta$ does depend on the norming sequence.
Finally, let  
\begin{equation*} %\label{eq:ellell}
x^{-\alpha} \ell(x) = \sup \{ t : t^{-1/\alpha} \ell_1(1/t) > x \}.
\end{equation*}
Then $A_x = A(x) = x^{1/\alpha} \ell_1(x)$ and $ B(y) = y^\alpha / \ell(y)$ are 
asymptotic 
inverses of each 
other, i.e.
\begin{equation} \label{eq:AB}
A(B(x)) \sim B(A(x)) \sim x \quad \text{as } \, x \to \infty, 
\end{equation}
and
%\begin{equation} \label{eq:ellell2}
$x^{1/\alpha} \ell_1(x) 
\sim \inf \{ y : x^{-1} \geq y^{-\alpha} \ell(y) \}$. 
%\end{equation}
Thus $\ell$ and $\ell_1$ asymptotically determines each other.
For properties of asymptotic inverse of regularly varying functions we refer to
\cite[Section 1.7]{BGT}.

By Corollary 3 in \cite{M} \eqref{eq:dgp-conv} holds on the 
subsequence $k_n$ with norming sequence $A_{k_n}$ if and only if 
\begin{equation} \label{eq:dgp-distf}
\begin{split}
& \overline F(x) =  \frac{\ell(x)}{x^\alpha}
 [ M_R(\delta(x)) + h_R(x) ], \\
& F(-x) = \frac{\ell(x)}{x^\alpha} [ M_L(\delta(x)) + h_L(x)],
\end{split}
\end{equation}
where $h_R, h_L$ are right-continuous functions such that
$\lim_{n \to \infty} h_{R/L}(A_{k_n} x ) = 0$, whenever $x$ is a 
continuity point of $M_{R/L}$.
Moreover, if $M_{R/L}$ is continuous, then $\lim_{x \to \infty} h_{R/L}(x) = 0$.

Clearly, \eqref{eq:dgp-distf} implies \eqref{eq:df-ass}. 
Thus if $F$ belongs to the domain of geometric partial attraction of a semistable
law, then Theorem \ref{thm:phi-asy-gen} applies.

Conditions \eqref{eq:def-rlp2-gen} and \eqref{eq:dgp-distf} are similar, but
the $\delta$ function in \eqref{eq:dgp-distf} complicates the 
asymptotics. In the special case $\ell_1 \equiv 1$ and
$k_n = \lfloor c^n \rfloor$, the function $\delta(x)$ can be replaced by
$x$ in \eqref{eq:dgp-distf}. 
%(The condition $\ell_1 \equiv 1$ is also needed,
%contrary to the remark after Corollary 3 in \cite{M}.)
Then \eqref{eq:def-rlp2-gen} with $\ell \sim 1$ 
is equivalent to \eqref{eq:dgp-distf}
with $h_{R/L}(x) \to 0$ as $x \to \infty$. In general, 
\eqref{eq:def-rlp2-gen} is a stronger condition.

\begin{lemma} \label{lemma:ss-logper}
Assume \eqref{eq:def-rlp2-gen}. Then there exists a subsequence $(k_n)$
satisfying \eqref{eq:H} with $c = r^\alpha$ such that \eqref{eq:dgp-distf}
holds with $M_R = p_R$ and $M_L = p_L$.
\end{lemma}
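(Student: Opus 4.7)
The plan is to build the subsequence $(k_n)$ explicitly by inverting the regularly varying function $B(y) = y^{\alpha}/\ell(y)$ on the geometric grid $\{r^n\}$, and then to absorb the difference between $p_R(x)$ and $p_R(\delta(x))$ (and likewise for $p_L$) into the error terms $h_R,h_L$ by exploiting the log-periodicity $p_R(ry) = p_R(y)$, $p_L(ry) = p_L(y)$. The construction of the norming is forced: since $A_n = n^{1/\alpha}\ell_1(n)$ and $B$ must be an asymptotic inverse of $A$ (relation \eqref{eq:AB}), the slowly varying function $\ell_1$ is determined up to asymptotic equivalence by $\ell$; fix any such $\ell_1$ and set $A_n = n^{1/\alpha}\ell_1(n)$.

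Next, define $k_n = \lceil B(r^n)\rceil = \lceil r^{n\alpha}/\ell(r^n)\rceil$ for $n$ large enough that the sequence is strictly increasing. Slow variation of $\ell$ yields
\[
\frac{k_{n+1}}{k_n} \sim \frac{B(r^{n+1})}{B(r^n)} = r^\alpha\cdot\frac{\ell(r^n)}{\ell(r^{n+1})} \longrightarrow r^\alpha,
\]
which is condition \eqref{eq:H} with $c = r^\alpha > 1$. Moreover, by \eqref{eq:AB} we have $A_{k_n} \sim A(B(r^n)) \sim r^n$, so we may write $A_{k_n} = r^n\epsilon_n$ with $\epsilon_n \to 1$.

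Now take any $x$ large enough that there is a unique $n$ with $A_{k_n}\le x < A_{k_{n+1}}$, and set $\delta(x) = x/A_{k_n}$. Since $p_R$ has period $r$ on the multiplicative scale, $p_R(x) = p_R(r^n\epsilon_n\delta(x)) = p_R(\epsilon_n\delta(x))$, so the right-tail hypothesis in \eqref{eq:def-rlp2-gen} can be rewritten as
\[
\overline F(x) = \frac{\ell(x)}{x^\alpha}\bigl[p_R(\delta(x)) + h_R(x)\bigr],\qquad h_R(x) := p_R(\epsilon_n\delta(x)) - p_R(\delta(x)).
\]
To verify $h_R(A_{k_n} u)\to 0$ at every continuity point $u$ of $p_R$, note that for $u\in[1,r)$ fixed and $n$ large, $A_{k_n}u < A_{k_{n+1}}$, hence $\delta(A_{k_n}u)=u$ and $h_R(A_{k_n}u) = p_R(\epsilon_n u)-p_R(u)\to 0$ by continuity at $u$ combined with $\epsilon_n\to 1$. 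If $p_R$ is continuous everywhere, uniform continuity on $[1,r]$ upgrades this to $h_R(x)\to 0$ without restriction to a subsequence.

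The left tail is handled identically, except one must replace the slowly varying coefficient $\widetilde\ell$ by $\ell$; the extra factor $\widetilde\ell(x)/\ell(x) = 1 + o(1)$ is folded into $h_L$, giving
\[
h_L(x) = \frac{\widetilde\ell(x)}{\ell(x)}\,p_L(\epsilon_n\delta(x)) - p_L(\delta(x)),
\]
which again tends to zero at continuity points of $p_L$ (and uniformly if $p_L$ is continuous). The main obstacle is purely bookkeeping: one must show that the approximation $A_{k_n}\sim r^n$ resulting from the asymptotic inversion of $B$ is precise enough that the multiplicative error $\epsilon_n$ is swallowed by continuity of $p_R,p_L$; once this is in place, the log-periodicity reduces the lemma to a routine verification.
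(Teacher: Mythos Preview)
Your proof is correct and follows essentially the same route as the paper: define $k_n = B(r^n)$ (up to integer part), use \eqref{eq:AB} to get $A_{k_n}\sim r^n$, and verify that $h_R(x)=p_R(x)-p_R(\delta(x))$ vanishes along $A_{k_n}u$ at continuity points $u$ of $p_R$ via log-periodicity and the continuity at $u$. Your explicit handling of the mismatch $\widetilde\ell\sim\ell$ in the left tail is a small bookkeeping improvement over the paper, which simply asserts that ``the same argument works for $F(-x)$''.
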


\begin{proof}
Recall the definition of $A$ and $B$. Define $k_n = B(c^{n/\alpha})$.
For notational ease we suppress the integer part.
Since $B$ is regularly varying with index $\alpha$, condition \eqref{eq:H}
holds. By \eqref{eq:AB} we have $A_{k_n} \sim c^{n/\alpha}$. Writing
\[
\overline F(x) = \frac{\ell(x)}{x^\alpha} \left[ p_R(\delta (x)) +
(p_R(x) - p_R(\delta(x))) \right],
\]
we only have to show that $\lim_{n \to \infty} h_R(A_{k_n} x) = 0$
holds whenever $x$ is a continuity point of $p_R$, for
$h_R(x) = p_R(x) - p_R(\delta(x))$. For simplicity fix
$x \in (1,c^{1/\alpha})$ to be a continuity point of $p_R$. 
Then $A_{k_n} \leq A_{k_n} x < A_{k_n +1}$
for large $n$, thus $\delta (A_{k_n} x) = A_{k_n} x / A_{k_n} = x$.
On the other hand, by the logarithmic periodicity of $p_R$
\[
p_R(A_{k_n} x ) = p_R ( c^{-n/\alpha} A_{k_n} x ) \to p_R(x),
\]
which implies that $h_R(A_{k_n} x) \to 0$. Clearly, the same argument
works for $F(-x)$.
\end{proof}

It is easy to give examples that show that the converse is not true.
Choose $\alpha = 1$, $c = 2$, $\ell(x) = \ell_1(x) = \log_2 x$, 
$k_n = 2^n$, $p_R = 2^{\{ \log_2 x \}}$, $p_L \equiv 0$, where
$\log_2$ stands for the base-2 logarithm, and $\{ \cdot \}$ is the
fractional part. Define for $x > 3$
\[
\overline F(x) = 2^{-\lfloor \log_2 x - \log_2 \log_2 x \rfloor} =
\frac{\log_2 x}{x} 2^{\{ \log_2 x - \log_2 \log_2 x\}}.
\]
Some lengthy but straightforward calculation shows that
\eqref{eq:dgp-distf} holds, but \eqref{eq:def-rlp2-gen} does not.

\medskip

For $x > 0$ (large) we define the position parameter as
\begin{equation} \label{eq:def-gamma}
\gamma_x  = \gamma(x) =\frac{x}{k_n}, \quad \text{where } k_{n-1} <  x \leq k_n.
\end{equation}
We say $u_n$ \emph{circularly converges to $u \in (c^{-1}, 1]$}, $u_n \stackrel{cir}{\to} 
u$, if
$u \in (c^{-1}, 1)$ and $u_n \to u$ in the usual sense, 
or $u=1$ and $(u_n)$ has limit points $c^{-1}$, or $1$, or both.
From Theorem 1 \cite{CsM} we see that (\ref{eq:dgp-conv}) holds along a subsequence 
$(n_r)_{r=1}^\infty$ (instead of $k_n$) if and only if 
$\gamma_{n_r} \stackrel{cir}{\to} \lambda \in (c^{-1}, 1]$ as $r \to \infty$.
In this case, by \cite[Theorem 1]{CsM} (or directly from the relation
$- R_\lambda(x) = \lim_{r \to \infty} n_r \overline F(A_{n_r} x)$) the 
L\'evy measure of the limit
\begin{equation*} % \label{eq:def_Rlambda}
\begin{split}
& \Lambda_\lambda((x,\infty))  = x^{-\alpha} {M_R(\lambda^{1/\alpha} x)} \\
& \Lambda_\lambda((-\infty, -x))  = x^{-\alpha} M_L(\lambda^{1/\alpha} x),
\quad x > 0. 
\end{split}
\end{equation*}
For any $\lambda > 0$ 
%$\in (c^{-1}, 1]$ 
let $V_\lambda$ be a semistable random variable with characteristic 
and distribution function 
\begin{equation}  \label{eq:semistable-chf-df-lambda}
\begin{split}
\psi_\lambda(t) &  =
\E e^{\ii t V_\lambda} =
\exp \left\{  \ii t a_\lambda + 
\int_{-\infty}^\infty \left(e^{\ii t x } - 1
- \ii t x \ind{|x| \leq 1 } \right) 
\Lambda_\lambda( \mathrm{d}  x) \right\} \\
G_\lambda(x) & = \P ( V_\lambda \leq x),
\end{split}
\end{equation}
where $a_\lambda \in \R$, for its precise form see \cite[Theorem 1]{CsM}.
Thus, whenever $\gamma_{n_r} \stackrel{cir}{\to} \lambda$,
\begin{equation*} % \label{eq:conv-subseq}
\frac{\sum_{i=1}^{n_r} X_i -C_{n_r}} {A_{n_r}} \to^d V_{\lambda}
\quad \text{as } r \to \infty.
\end{equation*}
To ease notation we define $\Lambda_\lambda$, $G_\lambda$ for any $\lambda > 0$, but note
that $\Lambda_{c \lambda} \equiv \Lambda_{\lambda}$, $G_{c \lambda} \equiv G_\lambda$, so 
these functions, distributions are different for $\lambda \in (c^{-1}, 1]$.

Let $X, X_1, X_2, \ldots$ be iid random variables with
distribution function $F$ such that \eqref{eq:dgp-distf} holds.
Cs\"org\H{o} and Megyesi \cite[Theorem 2]{CsM} showed the following merging result:
\begin{equation} \label{eq:merge}
\lim_{n \to \infty} \sup_{x \in \R}
\left| 
\P \left( \frac{S_n -C_n}{A_n} \leq x \right) -
G_{\gamma_n}(x) \right| = 0.
\end{equation}
The main theorem in \cite{Cs07} implies that $G_\lambda$ is $C^\infty$,
in particular its density function $g_\lambda$ exists.

\section{Local limit theorems for semistable laws}
\label{sec:LLTlatnonlat}

We prove local limit theorems for the distributions in the domain
of geometric partial attraction of semistable laws. 
As usual we have to distinguish between lattice and nonlattice
distributions. We first consider the lattice case.

A random variable, or its distribution is called lattice, 
if it is concentrated on the set $\{ a + h \Z \}$ for some 
$a \in \R$ and $h > 0$. If $a = 0$ the distribution is called 
arithmetic, or centered lattice.
The largest possible $h$ is the span of the lattice
distribution. We assume that $a = 0$ and $h = 1$, i.e.~the 
distribution is integer valued with span 1.
We prove the analogue of
Gnedenko's Local Limit Theorem (\cite[Theorem 8.4.1]{BGT}, 
\cite[Theorem 4.2.1]{IL}). The statement can be readily extended to
the general lattice case.

\begin{theorem} \label{thm:local-sst}
Let $X, X_1, \ldots$ be integer valued iid random variables 
with span 1, such that \eqref{eq:dgp-distf} holds. Then
\[
\lim_{n \to \infty}
\sup_{k} | A_n \P ( S_n = k) - g_{\gamma_n}((k-C_n)/A_n )| = 0.
\]
\end{theorem}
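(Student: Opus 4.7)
My plan is to follow Gnedenko's Fourier-inversion strategy, adapted to the fact that here both the rescaled law of $S_n$ and the comparison density $g_{\gamma_n}$ depend on $n$. Since $X$ is integer-valued with span $1$, the discrete inversion formula gives
\[
A_n\P(S_n=k) = \frac{1}{2\pi}\int_{-\pi A_n}^{\pi A_n} e^{-\ii u(k-C_n)/A_n}\,\varphi_n(u)\,\dd u,
\]
where $\varphi_n(u)=e^{-\ii u C_n/A_n}\varphi(u/A_n)^n$ is the characteristic function of $(S_n-C_n)/A_n$, and the Fourier inversion of the $C^\infty$ density $g_{\gamma_n}$ (smoothness noted after \eqref{eq:merge}) gives the analogous formula over $\R$. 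Subtracting and bounding uniformly in $k$,
\[
\sup_{k}\Big|A_n\P(S_n=k)-g_{\gamma_n}\bigl(\tfrac{k-C_n}{A_n}\bigr)\Big| \le \frac{1}{2\pi}\int_{-\pi A_n}^{\pi A_n}|\varphi_n(u)-\psi_{\gamma_n}(u)|\,\dd u +\frac{1}{2\pi}\int_{|u|>\pi A_n}|\psi_{\gamma_n}(u)|\,\dd u.
\]
I split the first integral over the ranges $|u|\le T$, $T<|u|\le\delta A_n$, and $\delta A_n<|u|\le\pi A_n$, with $T$ large and $\delta>0$ small to be chosen.

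On the compact range $|u|\le T$, the merging relation \eqref{eq:merge} together with tightness of the family $\{G_\lambda:\lambda\in[c^{-1},1]\}$ yields uniform convergence of Fourier transforms on compacta via a standard integration-by-parts argument: every subsequence of $(\gamma_n)$ has a further subsequence with $\gamma_{n_r}\stackrel{cir}{\to}\lambda\in(c^{-1},1]$, along which $\psi_{\gamma_{n_r}}\to\psi_\lambda$ pointwise and $\varphi_{n_r}\to\psi_\lambda$ by \cite[Theorem 1]{CsM}, so $\sup_{|u|\le T}|\varphi_n(u)-\psi_{\gamma_n}(u)|\to 0$ along the full sequence. On the boundary range $\delta A_n<|u|\le\pi A_n$, the span-$1$ hypothesis gives $|\varphi(t)|<1$ for $t\in[-\pi,\pi]\setminus\{0\}$, so by continuity and compactness $\rho:=\sup_{\delta\le|t|\le\pi}|\varphi(t)|<1$; hence $|\varphi_n(u)|\le\rho^n$ and the contribution is $O(A_n\rho^n)=o(1)$. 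The tail $\int_{|u|>\pi A_n}|\psi_{\gamma_n}(u)|\,\dd u\to 0$ follows from the exponential-type bound on $\psi_{\gamma_n}$ developed below, which holds uniformly in $\gamma_n\in(c^{-1},1]$.

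The hard part is the intermediate range $T<|u|\le\delta A_n$, where one needs an integrable majorant for $|\varphi_n(u)|+|\psi_{\gamma_n}(u)|$ uniform in $n$. From the final inequality of Theorem \ref{thm:phi-asy-gen}, $\Re(1-\varphi(t))\ge c_0|t|^\alpha\ell(1/|t|)$ for small $|t|$, so $|\varphi(t)|^2\le 1-c_0|t|^\alpha\ell(1/|t|)$ near $0$; taking $\delta$ small enough yields
\[
|\varphi_n(u)|\le\exp\!\bigl(-c_1 n\,|u/A_n|^\alpha\,\ell(A_n/|u|)\bigr).
\]
Using the asymptotic inversion $n\,\ell(A_n)\sim A_n^\alpha$ encoded in \eqref{eq:AB}, together with Potter's bounds to control the ratio $\ell(A_n/|u|)/\ell(A_n)$ as $|u|$ grows with $n$, this majorant is of the form $\exp(-c_2|u|^{\alpha-\varepsilon})$ for arbitrarily small $\varepsilon>0$, hence integrable over $|u|>T$. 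A parallel bound holds for $|\psi_{\gamma_n}(u)|$ since every L\'evy measure $\Lambda_\lambda$ has tail bounded by $C|x|^{-\alpha}$ uniformly in $\lambda\in(c^{-1},1]$. Choosing $T$ large then renders this contribution arbitrarily small. The main technical obstacle is precisely this uniform Potter-type control of the slowly varying and log-periodic structure, which reflects what distinguishes the semistable case from the classical stable/Gnedenko setting; the secondary delicate point is ensuring all bounds for $\psi_{\gamma_n}$ are uniform in $\gamma_n$, which works because the parameter effectively ranges over a compact set.
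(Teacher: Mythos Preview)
Your proposal is correct and follows essentially the same route as the paper: Fourier inversion plus a four-way split of the integration domain into a compact piece (handled by merging of characteristic functions), an intermediate piece (handled by the bound $|\varphi(t)|\le e^{-c|t|^\alpha\ell(1/|t|)}$ combined with Potter's bounds and the relation $n\ell(A_n)\sim A_n^\alpha$), a boundary piece (handled by the span-$1$ condition), and the $\psi_{\gamma_n}$-tail (handled by a uniform-in-$\lambda$ exponential bound). The only cosmetic differences are that the paper obtains the compact-range convergence by citing \cite[Theorem~3.1]{KCs} directly rather than via your subsequence argument, and disposes of the $\psi_{\gamma_n}$-tail by invoking uniform integrability from \cite[(7)]{Cs07} rather than sketching the exponential bound; both pairs of arguments are equivalent in content.
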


The Fourier analytic proof relies on the inversion formula
\begin{equation} \label{eq:latt-inv}
\P ( S_n = k) 
= \frac{1}{2 \pi} \int_{-\pi}^\pi e^{- \ii t k} \varphi(t)^n 
\, \dd t,
\end{equation}
and on the merging result \eqref{eq:merge}.

In the nonlattice case we extend Stone's local limit theorem \cite{Stone},
see also \cite[Theorem 8.4.2]{BGT}.

\begin{theorem} \label{thm:llt-stone}
Let $X, X_1, \ldots$ be iid nonlattice random variables
such that \eqref{eq:dgp-distf} holds. Then for any $h > 0$
\[
\lim_{n \to \infty}
\sup_{x} \left| \frac{A_n}{2h} \P ( S_n \in (x-h, x+h] ) - 
g_{\gamma_n}((x-C_n)/A_n )
\right| = 0.
\]
\end{theorem}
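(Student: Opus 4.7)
My strategy is to adapt Stone's classical Fourier-analytic argument, paralleling the lattice proof of Theorem~\ref{thm:local-sst} but replacing the inversion formula \eqref{eq:latt-inv} by an auxiliary smoothing on $\R$. The first step is to establish a smoothed LLT: for any probability density $\kappa$ with $\hat\kappa$ continuous and supported in $[-\delta,\delta]$,
\[
\sup_x \Bigl| A_n\,\E \kappa(S_n - x) - g_{\gamma_n}((x-C_n)/A_n) \Bigr| \to 0.
\]
Writing $\kappa_{A_n}(u)=A_n\kappa(A_n u)$, $\tilde S_n=(S_n-C_n)/A_n$ and $y=(x-C_n)/A_n$, a change of variables reduces the left-hand side to $\E\kappa_{A_n}(\tilde S_n-y)$, and by Fourier inversion the difference $\E\kappa_{A_n}(\tilde S_n - y)-g_{\gamma_n}(y)$ equals
\[
\frac{1}{2\pi}\int\bigl[\hat\kappa(-t/A_n)\,e^{-\ii tC_n/A_n}\varphi(t/A_n)^n - \psi_{\gamma_n}(t)\bigr]e^{-\ii ty}\,\dd t.
\]

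I would split this integral into the three regions $|t|\leq T$, $T\leq |t|\leq \delta A_n$, and $|t|>\delta A_n$. On the bounded region the merging relation \eqref{eq:merge}, upgraded to the characteristic-function level via \cite[Theorem~1]{CsM}, gives uniform convergence $e^{-\ii tC_n/A_n}\varphi(t/A_n)^n\to\psi_{\gamma_n}(t)$, killing this piece. On the middle range, the lower bound $\liminf_{t\to 0}\Re(1-\varphi(t))/(|t|^\alpha\ell(1/|t|))>0$ from Theorem~\ref{thm:phi-asy-gen} yields $|\varphi(t/A_n)|^n\leq\exp(-c|t|^\alpha)$ once $n$ is large, making the contribution negligible uniformly in $y$; simultaneously $\int_{|t|>T}|\psi_{\gamma_n}(t)|\,\dd t$ is small, uniformly in $\gamma_n\in[c^{-1},1]$, by $C^\infty$-smoothness of the semistable densities together with boundedness away from zero of the factors $M_R,M_L$. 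The tail $|t|>\delta A_n$ is treated the same way (only the $\psi_{\gamma_n}$ term survives, since $\hat\kappa$ vanishes there).

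With the smoothed LLT in hand, the nonlattice hypothesis enters through a standard majorant/minorant sandwich. For any $\eps>0$ I would produce $\kappa_{\pm}$ with $\hat\kappa_\pm$ compactly supported, $\kappa_-\leq\mathbf{1}_{(-h,h]}\leq \kappa_+$, and $\int(\kappa_+-\kappa_-)<\eps$; a Beurling--Selberg majorant, or a convolution-with-Fej\'er-kernel construction combined with tail truncation, works. Then
\[
\E\kappa_-(S_n-x)\leq \P(S_n\in(x-h,x+h])\leq \E\kappa_+(S_n-x),
\]
and applying the smoothed LLT to $\kappa_\pm$ sandwiches $(A_n/2h)\P(S_n\in(x-h,x+h])$ between quantities converging uniformly in $x$ to $(1\pm\eps/(2h))g_{\gamma_n}(y)$. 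Letting $\eps\to 0$ then yields the desired uniform limit.

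The main obstacle I anticipate is uniformity in $\gamma_n$: since the limit density $g_{\gamma_n}$ itself depends on $n$ through $\gamma_n\in(c^{-1},1]$, all Fourier estimates, including the convergence $\varphi(t/A_n)^n\to\psi_{\gamma_n}(t)$ and the decay of $|\psi_{\gamma_n}|$, must be controlled uniformly in $\gamma_n$. This reduces to careful control of the logarithmically periodic factors $M_R,M_L$ on the compact interval $[c^{-1},1]$, and to handling the circular-convergence subtlety near $\lambda=1$ (where $(\gamma_n)$ can have distinct limit points $c^{-1}$ and $1$) when invoking \eqref{eq:merge}.
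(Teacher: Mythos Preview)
Your scheme is correct and follows the same Stone-type strategy as the paper: a smoothed LLT via Fourier inversion, then desmoothing. The paper's desmoothing step is, however, more elementary than your Beurling--Selberg sandwich: it smooths with the single Fej\'er-type variable $Y$ of \eqref{eq:def-Y}, establishes \eqref{eq:stone-U}, and then removes $Y$ via the inequality $\P(S_n\in(x-h,x+h])\leq \P(S_n+Y\in(x-h^+,x+h^+])/\P(|Y|\leq\delta h)$ with $h^\pm=(1\pm\delta)h$, letting $\delta\downarrow 0$ and then $T\to\infty$. Your route is equally standard but imports a heavier construction.

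One point needs correction: you say the nonlattice hypothesis enters only through the majorant/minorant sandwich, but in fact it is already needed in the smoothed LLT. Beurling--Selberg majorants of $\mathbf{1}_{(-h,h]}$ with $L^1$-error below $\eps$ have $\hat\kappa_\pm$ supported on an interval of length of order $\eps^{-1}$, so you must allow $\delta$ arbitrarily large. On your middle range $T\leq|t|\leq\delta A_n$, the $\liminf_{t\to 0}$ bound from Theorem~\ref{thm:phi-asy-gen} only controls $|\varphi(t/A_n)|$ when $|t/A_n|$ lies below some fixed small $\varepsilon_0$; on the remaining portion $\varepsilon_0\leq|t/A_n|\leq\delta$ you need $\sup_{\varepsilon_0\leq|s|\leq\delta}|\varphi(s)|<1$, which is precisely the nonlattice condition. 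The paper isolates this as a fourth region $I_3$; see also the nonlattice clause of Lemma~\ref{lemma:phi-prop}. A smaller point: your smoothed LLT is stated for probability densities but will be applied to $\kappa_\pm$ with $\int\kappa_\pm=2h\pm O(\eps)$ and possibly signed $\kappa_-$; the Fourier argument goes through unchanged for any integrable $\kappa$ with compactly supported transform, but the limit becomes $(\int\kappa)\, g_{\gamma_n}(y)$ rather than $g_{\gamma_n}(y)$.
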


The difficulty in the nonlattice setup is the lack of a simple 
inversion formula as \eqref{eq:latt-inv}. Instead, in the usual
Fourier inversion formula one has to take limits. The standard
trick to overcome this is to add a small continuous random variable
with compactly supported characteristic function.
Fix $T > 0$ and let $Y$ be a random variable with density and 
characteristic function 
\begin{equation} \label{eq:def-Y}
j(x) = \frac{1 - \cos (Tx)}{\pi T x^2}, \quad 
\eta(t) = 
\begin{cases}
1 - \frac{|t|}{T}, & \text{for } t \in [-T, T], \\
0, & \text{otherwise}.
\end{cases}
\end{equation}
Then the inversion formula gives
\begin{equation} \label{eq-inversion}
\P ( S_n + Y \in (x- h, x+h] ) = \frac{h}{\pi} 
\int_{-T}^T \frac{\sin th}{th}
e^{-\ii tx} \varphi^n(t) \left( 1 - \frac{|t|}{T} \right) \dd t.
\end{equation}
Having this formula the proof goes as in the lattice case, only
at the end we have to get rid of the small perturbation.

\section{Strong renewal theorem in the semistable setting}

In what follows, we consider only nonnegative  random variables
with infinite mean in the domain of geometric partial attraction
of a semistable law. 
In particular, $\alpha \in (0,1]$. For $\alpha \in (0,1)$
there is no need for centering, i.e.~in \eqref{eq:An} we choose
$C_n \equiv 0$.

Using the local limit theorems, we obtain the analogue of~\cite[Theorem 1.1]{GL}
in the semistable setting, that is  assuming~\eqref{eq:dgp-distf}.  
Unlike in~\cite{GL}, we cannot use the precise
asymptotic of  $(1-\varphi(t))^{-1}$. Instead, we  heavily exploit the LLT, namely 
Theorems~\ref{thm:local-sst} and \ref{thm:llt-stone}
together with the asymptotic of $(1-\varphi(t))^{-1}$ 
obtained in Theorem~\ref{thm:phi-asy-gen}.

We start with the arithmetic case, and assume that $X$ is integer valued 
with span 1.
With the same notation as in~\cite{GL} introduce the 
renewal sequence 
\begin{equation} \label{eq:un} 
\begin{split}
u_n  =\sum_{k=0}^\infty \P(S_k=n) 
 = \frac{1}{\pi}\Re\int_0^\pi(1-\varphi(t))^{-1}\, e^{-\ii nt}\, \dd t,
\end{split}
\end{equation}
where we used the inversion \eqref{eq:latt-inv}.

\begin{theorem}
\label{thm:SRTGL}
Assume that $X$ is a nonnegative integer valued random variable
with span 1 and 
\eqref{eq:dgp-distf} holds with $\alpha\in (1/2,1)$. 
Set $B(x)= x^\alpha\ell(x)^{-1}$. Then
\begin{equation*}
\lim_{n\to\infty}
\Big| n^{1-\alpha}\ell(n)u_n- 
\alpha
\int_0^\infty g_{\gamma(B(n) x^{-\alpha})}(x) 
\, x^{-\alpha}\, \dd x \Big| =0.
\end{equation*}
\end{theorem}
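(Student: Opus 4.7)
The approach will be to express $u_n = \sum_{k \geq 0} \P(S_k = n)$ and to decompose the sum into three ranges of $k$ relative to the natural scale $B(n) = n^\alpha/\ell(n)$, applying Theorem~\ref{thm:local-sst} on the central window and treating the two tails by separate estimates. I fix parameters $0 < \eps < 1 < M$ and write $u_n = \Sigma_1(\eps) + \Sigma_2(\eps,M) + \Sigma_3(M)$, the partial sums over $k \in I_1 = [0, \eps B(n)]$, $I_2 = (\eps B(n), M B(n))$, $I_3 = [M B(n), \infty)$, and will let $\eps \to 0$, $M \to \infty$ at the end. Note that the periodicity $G_{c\lambda} \equiv G_\lambda$ means $k \mapsto g_{\gamma_k}$ suffers no genuine discontinuity at the $k_n$'s, which legitimizes the Riemann approximation below.

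For the main term $\Sigma_2$, Theorem~\ref{thm:local-sst} provides $A_k \P(S_k = n) = g_{\gamma_k}(n/A_k) + o(1)$ uniformly in $k \in I_2$; since $A_k \asymp n$ and $|I_2| = O(B(n))$, the aggregate $o(1)$ error contributes $o(B(n)/n) = o((n^{1-\alpha}\ell(n))^{-1})$, which disappears after multiplication by $n^{1-\alpha}\ell(n)$. The sum $\sum_{k \in I_2} g_{\gamma_k}(n/A_k)/A_k$ is then approximated by the integral $\int_{\eps B(n)}^{M B(n)} g_{\gamma_u}(n/A_u)/A_u\, \dd u$, using slow variation of $A_u$ and continuity of $\lambda \mapsto g_\lambda$. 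The substitution $u = B(n) x^{-\alpha}$, combined with the asymptotic inversion $A_u \sim n/x$ at this scale (cf.~\eqref{eq:AB}) and the identity $B(n)/n = (n^{1-\alpha}\ell(n))^{-1}$, transforms the integral exactly into
\[
\frac{\alpha}{n^{1-\alpha}\ell(n)} \int_{M^{-1/\alpha}}^{\eps^{-1/\alpha}} g_{\gamma(B(n) x^{-\alpha})}(x)\, x^{-\alpha}\, \dd x,
\]
so that multiplication by $n^{1-\alpha}\ell(n)$ yields the truncated form of the target. The upper tail $\Sigma_3$ is controlled by the uniform bound $\P(S_k = n) \leq C/A_k$ (from Theorem~\ref{thm:local-sst} and uniform boundedness of $g_\lambda$): using $A_k \sim k^{1/\alpha}\ell_1(k)$ together with $A_{B(n)} \sim n$, a short computation gives $n^{1-\alpha}\ell(n)\,\Sigma_3 = O(M^{1-1/\alpha}) \to 0$ as $M \to \infty$ since $1/\alpha > 1$. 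The matching cut-off pieces of the target integral vanish as $\eps \to 0$, $M \to \infty$ by uniform boundedness of $g_\lambda(0)$ and the uniform tail $g_\lambda(x)x^{-\alpha} = O(x^{-1-2\alpha})$ inherited from $\Lambda_\lambda((x,\infty)) = x^{-\alpha} M_R(\lambda^{1/\alpha} x)$ with bounded $M_R$.

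The main obstacle is the lower tail $\Sigma_1$. The crude LLT bound $\P(S_k = n) \leq C/A_k$ is useless here, since $\sum_{k \geq 1} A_k^{-1}$ is already a finite positive constant (as $1/\alpha > 1$), so this estimate alone would give $n^{1-\alpha}\ell(n)\,\Sigma_1 \to \infty$. One must exploit that for $k \ll B(n)$ the argument $n/A_k$ is large, so the semistable density $g_{\gamma_k}(y) = O(y^{-1-\alpha})$ dictates a sharper, tail-sensitive point estimate. Concretely, what is needed is the uniform local large deviation
\[
\P(S_k = n) \leq C\, A_k^\alpha / n^{1+\alpha} \quad \text{for } n/A_k \text{ large,}
\]
which upon summation gives $n^{1-\alpha}\ell(n)\,\Sigma_1 = O(\eps^2) \to 0$ as $\eps \to 0$ and closes the argument. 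Proving this uniform local large deviation — the semistable analogue of the well-known regularly varying estimates of Doney and Cline--Hsing — is the key technical step. The natural route is via the Fourier inversion \eqref{eq:latt-inv}, splitting the $t$-integral at scale $|t| \asymp 1/A_k$: on the outer range one uses $|\varphi(t)|^k \leq \exp(-c k |t|^\alpha \ell(1/|t|))$ derived from the $\liminf$ bound of Theorem~\ref{thm:phi-asy-gen}, while on the inner range one expands $\varphi(t)^k$ using the precise asymptotic of $1-\varphi(t)$ from the same theorem to extract the tail exponent $-1-\alpha$.
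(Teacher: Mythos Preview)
Your decomposition and the handling of $\Sigma_2$ and $\Sigma_3$ match the paper's (Lemma~\ref{lemma-riemsum} for the central window, the crude bound $\P(S_k=n)\le C/A_k$ for the upper tail). The divergence is at the lower tail $\Sigma_1$. The paper does \emph{not} attempt a local large deviation estimate; it works directly with the Fourier representation $\pi u_n=\Re\int_0^\pi(1-\varphi(t))^{-1}e^{-\ii nt}\,\dd t$ and applies the classical Garsia--Lamperti shift trick: for each $k<B(n/L^2)$ one rewrites $\int_{L/n}^\pi\varphi(t)^k e^{-\ii nt}\,\dd t$ as the average of itself and its $\pi/n$-translate, producing the difference $\varphi(t)^k-\varphi(t-\pi/n)^k$, which is then controlled by the H\"older bound $|\varphi(t+h)-\varphi(t)|\le \nu_3|h|^\alpha\ell(1/|h|)$ of Lemma~\ref{lemma:phi-prop}(iii) together with $\int_{L/n}^\pi|\varphi(t)|^k\,\dd t\le C/A_k$. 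Summing $k/A_k$ up to $B(n/L^2)$ and invoking $\alpha>1/2$ yields $|I_1|\le C\,n^{\alpha-1}\ell(n)^{-1}L^{1-2\alpha'}$, with no pointwise bound on $\P(S_k=n)$ finer than $O(1/A_k)$ ever needed.

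Your route via the uniform local large deviation $\P(S_k=n)\le C\,A_k^\alpha/n^{1+\alpha}$ would certainly close the argument once established, and the paper itself acknowledges (in the introduction) that LLT plus LLD suffices in principle. But this estimate is not available in the paper, and your sketch does not prove it: the assertion that on the inner range $|t|\le c/A_k$ one can ``expand $\varphi(t)^k$ using the precise asymptotic of $1-\varphi(t)$ to extract the tail exponent $-1-\alpha$'' is precisely the hard part. The function $\varphi$ is only $\alpha$-H\"older, so a single $\pi/n$-shift buys a factor $n^{-\alpha}$, not $n^{-1-\alpha}$; the merging approximation $\varphi(t/A_k)^k\approx\psi_{\gamma_k}(t)$ comes only with an $o(1)$ error and no rate; and the log-periodic factor $p_2(t)$ in $1-\varphi(t)$ obstructs any clean Taylor expansion. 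You have correctly located the obstacle but deferred it to a semistable Doney-type lemma that is neither proved here nor, to my knowledge, in the literature, whereas the paper's Garsia--Lamperti argument sidesteps the pointwise LLD entirely.
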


The estimate of the main term above holds in the whole range
$\alpha \in (0,1)$, and it is treated separately in the following
statement. It is the analogue of Lemma 2.2.1 in \cite{GL}.

\begin{lemma}\label{lemma-riemsum}
Assume that $X$ is a nonnegative integer valued random variable
with span 1 and \eqref{eq:dgp-distf} holds with $\alpha\in (0,1)$. 
For any  $L > 1$
\[
\begin{split}
%& 
\limsup_{n \to \infty}
\Big| n^{1-\alpha} \ell(n)
\sum_{k=B(n/L^2)}^{B(nL)} 
\P(S_k = n) 
- \alpha \int_{L^{-1}}^{L^2} 
g_{\gamma(B(n) y^{-\alpha})}(y) y^{-\alpha} \, \dd y
\Big|
%\\ &
\leq  L^{-1}.
\end{split}
\]
\end{lemma}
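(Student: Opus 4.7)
The plan is to apply the lattice local limit theorem (Theorem~\ref{thm:local-sst}) term-by-term and to identify the resulting expression as a Riemann sum for the target integral. Since $\alpha\in(0,1)$ no centering is needed, so
\[
\P(S_k=n) \;=\; \frac{1}{A_k}\,g_{\gamma_k}\!\bigl(n/A_k\bigr) \;+\; \frac{\eta_k}{A_k},
\qquad
\eta_k := \sup_{m\in\Z}\bigl|A_k\,\P(S_k=m)-g_{\gamma_k}(m/A_k)\bigr|\to 0
\]
as $k\to\infty$. Because the summation range starts at $B(n/L^2)\to\infty$, one has $\max_{k}\eta_k\to 0$ as $n\to\infty$, uniformly over the summation range.

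First I would bound the error contribution. The sequence $A_k^{-1}$ is regularly varying in $k$ of index $-1/\alpha<-1$, so Karamata's theorem together with \eqref{eq:AB} gives
\[
\sum_{k\ge B(n/L^2)} A_k^{-1} \;\sim\; \frac{\alpha}{1-\alpha}\cdot\frac{B(n/L^2)}{A_{B(n/L^2)}} \;=\; O\!\left(L^{2-2\alpha}\,n^{\alpha-1}\ell(n)\right).
\]
Multiplying by $n^{1-\alpha}\ell(n)\,\max_k\eta_k$ this contribution is $o(1)$, well below $L^{-1}$.

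Next I would handle the main term by converting $\sum_k A_k^{-1}g_{\gamma_k}(n/A_k)$ into an integral via the substitution $y=n/A_k$, i.e.~$k=B(n/y)$. Since $A_k\sim k^{1/\alpha}\ell_1(k)$ is asymptotically differentiable with $dk/dy\sim -\alpha\,n^\alpha\,y^{-1-\alpha}\,\ell(n/y)$, the Riemann sum reads
\[
\sum_{k=B(n/L^2)}^{B(nL)} A_k^{-1}\,g_{\gamma_k}(n/A_k) \;\sim\; \alpha\,n^{\alpha-1}\int_{L^{-1}}^{L^2} g_{\gamma_{B(n/y)}}(y)\,y^{-\alpha}\,\ell(n/y)\,\dd y,
\]
with error $o(n^{\alpha-1}\ell(n))$ coming from the fact that $|\Delta y|\sim y^{1+\alpha}/(\alpha n^\alpha\ell(n/y))\to 0$ uniformly on $[L^{-1},L^2]$. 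Since $B$ is regularly varying of index $\alpha$, $B(n/y)\sim B(n)y^{-\alpha}$ uniformly on compact $y$-sets, so $\gamma_{B(n/y)}$ may be replaced by $\gamma(B(n)y^{-\alpha})$; slow variation of $\ell$ gives $\ell(n/y)/\ell(n)\to 1$ uniformly. Multiplication by $n^{1-\alpha}\ell(n)$ then yields precisely the target integral. The residual $L^{-1}$ bound absorbs the mismatch between the summation endpoints $B(n/L^2)$, $B(nL)$ and the exact integration endpoints $L^{-1}$, $L^{2}$, using that the integrand $g_\gamma(y)\,y^{-\alpha}$ is uniformly bounded in $(\gamma,y)$ on the relevant range.

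The main obstacle I expect is the rough dependence of $g_{\gamma_k}$ on $k$: the position parameter $\gamma_k$ jumps at each index $k_m$, so the summand is only piecewise rather than globally smooth in $k$, and one must verify that the Riemann sum for this piecewise function really converges to the corresponding Lebesgue integral. This is legitimate because $g_\gamma(y)$ is jointly continuous in $(\gamma,y)$ (via the main result of \cite{Cs07}) while $\gamma(B(n)y^{-\alpha})$, as a function of $y\in[L^{-1},L^2]$, has only finitely many well-separated jumps corresponding to the indices $k_m$ in that range; between consecutive jumps everything is smooth and the standard Riemann-sum-to-integral argument applies block by block.
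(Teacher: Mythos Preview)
Your approach is essentially the paper's: decompose via the lattice LLT, bound the error sum by Karamata (note your $\ell(n)$ should be $\ell(n)^{-1}$ in that estimate), and read the main term as a Riemann sum. The only real difference is in execution: the paper parametrizes by $x_k = k\,\ell(n)/n^\alpha$ so the mesh $x_{k+1}-x_k = \ell(n)/n^\alpha$ is exactly uniform, and then proves directly that $f_n(x) = x^{-1/\alpha} g_{\gamma(xB(n))}(x^{-1/\alpha})$ is uniformly Lipschitz on compacta (using Lemma~\ref{lemma:g-dens}, which gives $|g_{\lambda_1}-g_{\lambda_2}|\le \nu_4|\lambda_1-\lambda_2|$), so the Riemann sum converges without any block-by-block analysis. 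Your parametrization $y=n/A_k$ forces you to handle a nonuniform mesh and to justify the replacement $\gamma_{B(n/y)}\to\gamma(B(n)y^{-\alpha})$ across the jumps of $\gamma$; this works, but the paper's route avoids both issues and yields $\gamma(B(n)y^{-\alpha})$ exactly after the change of variables $y=x^{-1/\alpha}$.
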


Recall that the renewal function is denoted by 
$U(y):=\sum_{n}F^{n*}(y)$.  The next result gives the SRT in the 
semistable  nonarithmetic case.

\begin{theorem}
\label{thm:SRTEr}
Assume that $X$ is a nonnegative nonarithmetic random variable and 
\eqref{eq:dgp-distf} holds with $\alpha\in (1/2,1)$. 
Set $B(x)= x^\alpha\ell(x)^{-1}$. Then for any $h>0$,
\begin{equation*}
\lim_{y\to\infty}
\Big| \frac{y^{1-\alpha}\ell(y)}{2h}\left(U(y+h)-U(y-h)\right)
-\alpha
\int_0^\infty g_{\gamma(B(y) x^{-\alpha})}(x) 
\, x^{-\alpha}\, \dd x \Big| =0.
\end{equation*}
\end{theorem}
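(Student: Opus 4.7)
The plan is to mirror the proof of Theorem \ref{thm:SRTGL}, replacing Gnedenko's lattice LLT by Stone's nonlattice LLT, and to handle the nonarithmetic lattice case via equidistribution. Writing
\[
\Delta(y) := U(y+h) - U(y-h) = \sum_{k=0}^\infty \P(S_k \in (y-h, y+h]),
\]
I split the sum, for a large parameter $L > 1$, into a central range $k \in [B(y/L^2), B(yL)]$ and two tail regions. The aim is to show that the central piece, multiplied by $y^{1-\alpha}\ell(y)/(2h)$, converges to $\alpha\int_{L^{-1}}^{L^2} g_{\gamma(B(y)x^{-\alpha})}(x) x^{-\alpha}\,\dd x$, while the two tail sums together contribute $O(L^{-1})$ uniformly in $y$; sending $L \to \infty$ then concludes.

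For the central range and for nonlattice $X$, Theorem \ref{thm:llt-stone} gives
\[
\P(S_k \in (y-h, y+h]) = \frac{2h}{A_k} g_{\gamma_k}(y/A_k) + \frac{o(1)}{A_k}
\]
uniformly in $k \in [B(y/L^2), B(yL)]$. With the substitution $k = B(y)x^{-\alpha}$, one has $A_k \sim y/x$ by $A \circ B \sim \mathrm{id}$, and $\gamma_k = \gamma(B(y)x^{-\alpha})$. The resulting Riemann sum converges, for each fixed $L$, to $2h\alpha (y^{\alpha-1}/\ell(y)) \int_{L^{-1}}^{L^2} g_{\gamma(B(y)x^{-\alpha})}(x) x^{-\alpha}\,\dd x$; this is the nonlattice analogue of Lemma \ref{lemma-riemsum}. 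If instead $X$ is lattice but nonarithmetic, supported on $a + \beta\Z$ with $a/\beta$ irrational, I apply the lattice LLT (Theorem \ref{thm:local-sst}) to the integer-valued random variables $(S_k - ka)/\beta$. The number of lattice points of $\{ka + m\beta : m \in \Z\}$ in $(y-h, y+h]$ is $\lfloor 2h/\beta\rfloor$ or $\lceil 2h/\beta\rceil$, with the exact count determined by the fractional part of $(y-ka)/\beta$. Since $a/\beta$ is irrational, Weyl equidistribution (unique ergodicity of the irrational rotation on $\R/\Z$) guarantees that this count averages to $2h/\beta$ over $k$ in the central range, and weighting by the LLT density $g_{\gamma_k}(y/A_k)/A_k$ recovers the same Riemann sum as in the nonlattice case.

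The main obstacle is the tail estimate, namely bounding
\[
\sum_{k < B(y/L^2)} \P(S_k \in (y-h, y+h]) + \sum_{k > B(yL)} \P(S_k \in (y-h, y+h]) \leq C L^{-1} \frac{y^{\alpha-1}}{\ell(y)}
\]
with $C$ independent of $y$ (for $y$ large) and $L$. Both sums are controlled through the smoothed Fourier inversion \eqref{eq-inversion}, summed in $k$, which converts the problem to estimating an integral of $|1 - \varphi(t)|^{-1}$ weighted by oscillatory and smoothing factors. The lower bound $\liminf_{t\to 0} |t|^{-\alpha}\ell(1/|t|)^{-1} \Re(1-\varphi(t)) > 0$ from Theorem \ref{thm:phi-asy-gen} yields $|1-\varphi(t)|^{-1} \leq C|t|^{-\alpha}\ell(1/|t|)^{-1}$, and the hypothesis $\alpha > 1/2$ is precisely what makes the associated integrals finite, exactly as in Garsia and Lamperti \cite{GL} and Erickson \cite{Erickson70}. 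The semistable refinement is to carry the log-periodic factors in $p_R$ and $\ell$ through these bounds without disturbing the Riemann sum structure of the central term; this is where the merging result \eqref{eq:merge} and the full strength of Theorem \ref{thm:phi-asy-gen} enter.
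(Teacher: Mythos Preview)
Your proposal is correct and follows the paper's route closely: the same three-way split in $k$, Stone's LLT (Theorem~\ref{thm:llt-stone}) for the central piece in the nonlattice case, and the lattice LLT plus unique ergodicity of the irrational rotation (Lemma~\ref{lemma-sumnonar}) in the nonarithmetic lattice case. One point of difference worth noting: for the tail pieces the paper does \emph{not} sum in $k$ to produce a factor $|1-\varphi(t)|^{-1}$. The upper tail $k>B(yL)$ is handled directly from the LLT bound $\P(S_k\in(y-h,y+h])\le C/A_k$ (the nonlattice analogue of \eqref{eq:k22}), while for the lower tail $k<B(y/L^2)$ the paper keeps the sum over $k$, passes to the smoothed Fourier representation \eqref{eq-inversion}, and applies the shift-by-$\pi/y$ trick to each $\varphi(t)^k$ separately, exactly as in \eqref{eq:I111decomp}--\eqref{eq:J2-2}; the condition $\alpha>1/2$ enters through $\sum_{k\le N} k/A_k$ rather than through integrability of $|1-\varphi(t)|^{-1}$. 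Your summed-in-$k$ description is closer to Erickson's original method, which the paper explicitly departs from, though either implementation leads to the same estimates.
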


In the nonarithmetic lattice case without loss of generality we 
assume that $X$ has span 1, and that $X \in a + \N$, where 
$a \in (0,1)$ is irrational.
The proof of Theorem~\ref{thm:SRTEr} in this case
is essentially the same as the proof
of Theorem~\ref{thm:SRTGL}, except for the treatment of the leading term.
To make this precise  we introduce the following notation.

Let $\widetilde X = X - a$ denote the centered version of $X$,
and $\widetilde S_k = S_k  - k a$. Fix $0 < h < 1/2$.
Define
\[
I_{k,y} =
\begin{cases}
1, & \text{if $(y-ka - h, y-ka+h]$ contains an integer}, \\
0, & \text{otherwise},
\end{cases}
\]
and let $\langle y - ka \rangle $ denote the unique integer in
the interval $(y-ka-h, y-ka+h]$ if $I_{k,y} = 1$, and 0 otherwise.
Then
\[
\P ( S_k \in (y-h, y+h ] ) =
\P ( \widetilde S_k \in ( y - ka - h, y -ka + h] )
= I_{k,y} \P ( \widetilde S_k = \langle y-ka \rangle )
\]
and 
\begin{equation}
 \label{eq:u-nonar}
U(y+h) - U(y-h) = \sum_{k=0}^\infty \P ( S_k \in (y-h, y+h])
= \sum_{k=0}^\infty I_{k,y} \P( \widetilde S_k = \langle y - ka \rangle ).
\end{equation}

\begin{lemma}\label{lemma-sumnonar}
Assume that $X$ is a nonnegative,  nonarithmetic and lattice
with span 1. Suppose  that \eqref{eq:dgp-distf} holds with $\alpha\in (0,1)$. 
Then for any $h \in (0,1/2)$
\[
\begin{split}
%& 
\limsup_{y \to \infty}
\Big| y^{1-\alpha} \ell(y)
\sum_{k=B(y/L^2)}^{B(yL)} 
 I_{k,y} \P( \widetilde S_k = \langle y - ka \rangle )
- 2h \alpha \int_{L^{-1}}^{L^2} 
g_{\gamma(B(y) x^{-\alpha})}(x) x^{-\alpha} \, \dd x
\Big|
%\\ &
\leq  L^{-1}.
\end{split}
\]
\end{lemma}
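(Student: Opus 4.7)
The strategy is to reduce Lemma~\ref{lemma-sumnonar} to Lemma~\ref{lemma-riemsum} in two independent steps: first, a pointwise application of the lattice LLT turns $\P(\widetilde S_k = \langle y-ka\rangle)$ into $A_k^{-1} g_{\gamma_k}(y/A_k)$; second, the unique ergodicity of the irrational rotation by $a$ lets us replace the indicator $I_{k,y}$ by its average value $2h$.

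Since $\widetilde X = X - a$ is nonnegative integer-valued with span $1$ and shares the same tail with $X$, it satisfies \eqref{eq:dgp-distf} with the same $(\alpha,\ell,p_R,p_L)$. Theorem~\ref{thm:local-sst}, applied to $\widetilde X$ (whose centering reads $\widetilde C_n = -na + O(1)$), yields
\[
A_k\, \P(\widetilde S_k = m) = g_{\gamma_k}\bigl((m+ka)/A_k\bigr) + o(1)
\]
uniformly in $m$ as $k \to \infty$. For $m = \langle y - ka \rangle$ one has $m+ka \in (y-h, y+h]$, so $(m+ka)/A_k = y/A_k + O(h/A_k)$; since $A_k \asymp y$ on $k \in [B(y/L^2), B(yL)]$, uniform continuity of $g_\gamma$ allows replacement by $g_{\gamma_k}(y/A_k)$ at cost $o(1)$. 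Summed over this range, the LLT error contributes $y^{1-\alpha}\ell(y) \cdot o(1) \cdot O(B(y)) \cdot O(y^{-1}) = o(1)$.

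Now decompose $I_{k,y} = 2h + (I_{k,y} - 2h)$. The $2h$ piece, multiplied by $y^{1-\alpha}\ell(y)$, equals $2h$ times the quantity estimated in Lemma~\ref{lemma-riemsum}, and hence yields the target integral with error at most $2h L^{-1} < L^{-1}$. For the cancellation, observe that $I_{k,y} = \mathbf{1}_A(ka - y \bmod 1)$ for a Jordan-measurable $A \subset [0,1)$ of length $2h$. Unique ergodicity of rotation by irrational $a$ gives, uniformly in the base point,
\[
\sup_x \left| \frac{1}{N} \sum_{k=M+1}^{M+N} \bigl( \mathbf{1}_A(x + ka \bmod 1) - 2h \bigr) \right| \to 0 \quad \text{as } N \to \infty.
\]
Partition $[B(y/L^2), B(yL)]$ first into the Cs\"org\H{o}-Megyesi blocks $(k_{n-1}, k_n]$ (only $O(\log L)$ of them intersect the range, and on each $\gamma_k = k/k_n$ is smooth), and then further into sub-blocks of size $N$ with $N \to \infty$ and $N = o(k_n)$. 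On each sub-block, $A_k^{-1} g_{\gamma_k}(y/A_k)$ is essentially constant (by joint continuity of $(\gamma, x) \mapsto g_\gamma(x)$ and slow variation of $A$), while $\sum (I_{k,y} - 2h) = o(N)$ uniformly; collecting sub-block contributions shows the cancellation term, times $y^{1-\alpha}\ell(y)$, is $o(1)$.

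\textbf{Main obstacle.} The delicate point is coupling the uniform ergodic-average convergence with the slow variation of $k \mapsto A_k^{-1} g_{\gamma_k}(y/A_k)$. A naive partition can be spoiled by the jumps of $\gamma_k$ at the subsequence points $k_n$, so the block decomposition must respect both scales --- a few CM-blocks with smooth interior, each subdivided into sub-blocks long enough for ergodic averaging, short enough that the smooth factor is essentially constant. This is where the paper's approach genuinely differs from Erickson's, since it bypasses direct manipulation of the renewal measure in favour of the LLT plus a soft averaging argument.
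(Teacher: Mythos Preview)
Your approach is essentially the paper's: apply the lattice LLT to $\widetilde X$, split $I_{k,y} = 2h + (I_{k,y}-2h)$, handle the $2h$ piece via the Riemann-sum argument of Lemma~\ref{lemma-riemsum}, and kill the oscillation $(I_{k,y}-2h)$ by unique ergodicity of the rotation combined with a block decomposition over which $A_k^{-1} g_{\gamma_k}(y/A_k)$ is essentially constant. Two small simplifications relative to your sketch: for $\alpha<1$ the correct centering is $\widetilde C_n = 0$ (not $-na+O(1)$), and you still reach $g_{\gamma_k}(y/A_k)$ because $ka/A_k = O(B(y)/y) \to 0$ uniformly in the range; and a single partition into blocks of \emph{fixed} size $N = N_\varepsilon$ suffices, since the periodicity $g_{c\lambda} \equiv g_\lambda$ together with Lemma~\ref{lemma:g-dens} yields $\sup_k |g_{\gamma_{k+1}}(y/A_{k+1}) - g_{\gamma_k}(y/A_k)| \to 0$ \emph{including across} the subsequence points $k_n$ (the jump of $\gamma_k$ from $1$ to $\approx c^{-1}$ is invisible to $g_\gamma$), so your two-level CM-block scheme and the ``main obstacle'' it is meant to address are unnecessary.
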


In the proof of the nonlattice case of  Theorem~\ref{thm:SRTEr} we first apply
the ideas of the arithmetic case to 
the smoothed version as in \eqref{eq-inversion}, 
then `unsmooth' the limit.
\smallskip

The case $\alpha = 1$ is different, already in the regularly varying 
framework. However, the difference is more apparent in the semistable setup,
since the usual limit result holds.
We assume that $\E X = \infty$, because if it was finite, the classical
renewal theorem would work. Our results are special cases
of Lemma 67 in Uchiyama \cite{Uchi2} and Corollary 2 in \cite{Uchi}.
For completeness, we state the results.

Now, instead of \eqref{eq:un} we use the inversion formula 
\begin{equation} \label{eq:u-repGL}
u_n = \frac{2}{\pi} \int_0^\pi 
W(t) \, \cos nt \, \dd t,
\end{equation}
see Lemma 3.1.1 in \cite{GL} or (2.5) in \cite{Erickson70},
where 
\begin{equation*} %\label{eq:def-W}
W(t) = \Re \frac{1}{1 - \varphi(t)}
= \frac{\Re (1-\varphi(t))}
{| 1 - \varphi(t) |^2}.
\end{equation*}
The expectation `almost exists' in the sense that the truncated first
moment
\[
L(x) = \int_1^x \overline F(u) \dd u
\]
is slowly  varying. The key ingredient is Lemma 1 in \cite{Uchi}, 
the slow variation of the integral of $W$. 
The regularly varying version is Lemma 3 in \cite{Erickson70}.

\begin{lemma}[Lemma 1 in \cite{Uchi}] \label{lemma:a1-L}
Assume that $X$ is a nonnegative random variable such that
\eqref{eq:dgp-distf} holds with $\alpha = 1$, and 
$\E X = \infty$. Then as $x \to \infty$
\[
\int_0^{1/x} W(t) \, \dd t \sim L(x)^{-1} \frac{\pi}{2}.
\]
\end{lemma}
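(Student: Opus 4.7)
The plan is to reduce everything to the Fourier transform of $\overline F$: let $\hat F(t) = \int_0^\infty \overline F(y)e^{\ii ty}\,\dd y$ as an improper Riemann integral. Integration by parts (valid for $t > 0$ since $\overline F(y) \to 0$ and $1 - e^{\ii ty}$ vanishes at $y=0$) gives $1-\varphi(t) = -\ii t\hat F(t)$. Splitting $\hat F = U+\ii V$ into real and imaginary parts, the same identity yields $\Im\varphi(t) = tU(t)$ and $\Re(1-\varphi(t)) = tV(t)$. Feeding these into the one-sided $\alpha = 1$ branch of Theorem \ref{thm:phi-asy-gen}, I read off $U(t) \sim L(1/t)$ and $V(t) = O(\ell(1/t))$ as $t \to 0^+$; since $L/\ell \to \infty$, this gives $|\hat F(t)|^2 \sim L(1/t)^2$ and hence
\[
W(t) = \frac{V(t)}{t\,|\hat F(t)|^2} \sim \frac{V(t)}{t L(1/t)^2} \quad \text{as } t \to 0^+.
\]

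I would then evaluate $\int_0^{1/x} V(t)/(tL(1/t)^2)\, \dd t$ by an exchange of integration followed by the substitution $u = ty$:
\[
\int_0^{1/x} \frac{V(t)}{tL(1/t)^2}\, \dd t = \int_0^\infty \overline F(y) \int_0^{y/x} \frac{\sin u}{u\, L(y/u)^2}\, \dd u\, \dd y.
\]
Fix a large $M$. For $y > Mx$, the Potter bounds for slowly varying $L$ allow the uniform replacement of $L(y/u)$ by $L(y)$, and $\int_0^{y/x}\sin u/u\,\dd u \to \pi/2$ as $y/x \to \infty$; the inner integral is therefore $\sim \pi/(2L(y)^2)$, and using $\dd L(y) = \overline F(y)\,\dd y$,
\[
\int_{Mx}^\infty \frac{\pi\,\overline F(y)}{2L(y)^2}\, \dd y = \frac{\pi}{2L(Mx)} \sim \frac{\pi}{2L(x)}
\]
by the slow variation of $L$. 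On the complementary range $y \leq Mx$, I would crudely bound $|\sin u/u| \leq 1$ and $L(y/u) \geq L(x)$, then invoke Karamata's theorem for slowly varying integrands of index one, $\int_0^{Mx} y\overline F(y)\,\dd y = O(x\ell(x))$, to obtain an $O(\ell(x)/L(x)^2) = o(1/L(x))$ contribution.

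The main obstacle is the rigorous justification of the Fubini exchange, since $V(t)$ is only conditionally convergent as an improper integral. I would handle this by truncating $\overline F$ at level $R$ and applying Fubini to the truncated integral, then passing $R \to \infty$: the monotonicity of $\overline F$ (as a tail probability) furnishes the Dirichlet-type bound $\bigl|\int_R^\infty\overline F(y)\sin(ty)\,\dd y\bigr| \leq 2\overline F(R)/t$, which, combined with a careful splitting of the $t$-integral near the origin and the uniform asymptotic $V(t) = O(\ell(1/t))$ on small $t$, shows that the tail contribution vanishes in the limit $R\to\infty$. A secondary technical point is controlling the error in replacing $L(y/u)$ by $L(y)$: since $y/u \geq x$ on the range of interest, the Potter bounds give the required uniform estimate, and the oscillatory tail of $\int_0^{y/x}\sin u/u\,\dd u$ is handled by standard Abel summation.
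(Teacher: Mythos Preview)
The paper does not prove this lemma; it simply cites it as Lemma~1 of Uchiyama~\cite{Uchi}. So there is no in-paper proof to compare against, and your proposal must stand on its own.

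Your outline is correct: from Theorem~\ref{thm:phi-asy-gen} one has $U(t)\sim L(1/t)$ and $V(t)=O(\ell(1/t))$, hence $|1-\varphi(t)|^2\sim t^2L(1/t)^2$ and $W(t)\sim V(t)/(tL(1/t)^2)$; since $V\geq 0$, this asymptotic can be integrated. The final identity $\int_{Mx}^\infty \overline F(y)\,L(y)^{-2}\,\dd y=L(Mx)^{-1}$ is the right way to extract the constant $\pi/2$, and your treatment of the range $y\le Mx$ is fine.

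Two points where your write-up is thinner than the analysis actually requires. First, the Fubini problem you flag can be avoided entirely: instead of $V(t)=\int_0^{\infty-}\overline F(y)\sin(ty)\,\dd y$, use $tV(t)=\Re(1-\varphi(t))=\int_0^\infty(1-\cos ty)\,\dd F(y)\ge 0$. Then Tonelli applies directly, and after the substitution $u=ty$ one meets the absolutely convergent kernel $(1-\cos u)/u^2$ (whose integral over $(0,\infty)$ is also $\pi/2$). This bypasses the truncation argument completely. Second, your claim that the Potter bounds allow ``uniform replacement of $L(y/u)$ by $L(y)$'' is only valid for $u$ in a fixed compact set; when $u$ runs up to $y/x$ one has $y/u$ as small as $x$, and $L(x)/L(y)$ need not be close to~$1$. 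The correct handling is to split at a large fixed $K$: on $u\in[1/K,K]$ slow variation gives the replacement; on $u\in(0,1/K)$ the bound $L(y/u)\ge L(Ky)\sim L(y)$ and $|\sin u/u|\le 1$ give an $O(1/K)$ error; on $u\in(K,y/x)$ one uses that $g(u)=1/(uL(y/u)^2)$ is eventually monotone (because $(y/u)\overline F(y/u)/L(y/u)\asymp \ell(y/u)/L(y/u)\to 0$), so the second mean value theorem bounds the oscillatory tail by $2g(K)\sim 2/(KL(y)^2)$. After these refinements your argument goes through.
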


The arithmetic version of the next result is a special case 
of Lemma 67 in \cite{Uchi2}, and the nonarithmetic version
is a special case of Corollary 2 in \cite{Uchi}.
The proof is based on Lemma \ref{lemma:a1-L} and on the argument 
in \cite{Erickson70}.

\begin{theorem} \label{thm:srt-a1}
Assume that $X$ is a nonnegative random variable such that 
\eqref{eq:dgp-distf} holds with $\alpha = 1$, and 
$\E X = \infty$.
If $X$ is integer valued with span 1 then
\[
\lim_{n \to \infty}  L(n) u_n  = 1, 
\]
while if $X$ is nonarithmetic then for any $h > 0$
\[
\lim_{y \to \infty} 
L(y) ( U(y+h) - U(y-h) )  = 2h.
\]
\end{theorem}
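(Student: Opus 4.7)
The plan is to follow Erickson's~\cite{Erickson70} Fourier-analytic approach, which in the $\alpha = 1$ regime needs only an averaged asymptotic for $W$ rather than a pointwise one. The semistable assumption \eqref{eq:dgp-distf} enters only through Lemma~\ref{lemma:a1-L}, which asserts that $H(t) := \int_0^t W(s)\,\dd s \sim \pi/(2L(1/t))$ as $t \to 0^+$. In particular $H$ is asymptotically slowly varying in $1/t$, and this is the only property of the tail of $F$ the proof will use; the absence of a precise pointwise asymptotic for $W$ in the semistable setting is irrelevant.

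For the arithmetic case I would start from the inversion formula \eqref{eq:u-repGL} and integrate by parts to obtain
\[
u_n = \frac{2(-1)^n H(\pi)}{\pi} + \frac{2n}{\pi} \int_0^\pi H(t) \sin(nt)\,\dd t.
\]
After substituting $s = nt$, the integral becomes $(2/\pi)\int_0^{n\pi} H(s/n) \sin s\,\dd s$, which I would split at $s = N$ for a large fixed $N$. On $[0,N]$, the slow variation of $H$ gives $H(s/n) = (1+o(1))\pi/(2L(n))$ uniformly in $s$, yielding a main contribution of order $(1-\cos N)/L(n)$. On the tail $[N, n\pi]$ I would integrate by parts once more, trading $\sin s$ for $\cos s - \cos N$ and producing a boundary term at $s = n\pi$ that, via $\cos(n\pi)=(-1)^n$, cancels the standing $2(-1)^n H(\pi)/\pi$ term. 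The remaining piece is $o(1/L(n))$. Letting first $n\to\infty$ and then $N \to \infty$ gives $L(n) u_n \to 1$.

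For the nonarithmetic case the strategy is parallel but operates on the smoothed renewal measure. I would introduce the kernel $j$ from \eqref{eq:def-Y} and consider $\sum_n \P(S_n + Y \in (y-h, y+h])$, which by \eqref{eq-inversion} admits, after summation and symmetrization, a Fourier representation over $[-T,T]$ in terms of $W(t)(\sin(ht)/t)\cos(ty)$; taking real parts handles the small-$t$ singularity of $1/(1-\varphi)$. The same integration-by-parts argument, now with $\cos(ty)$ in place of $\cos(nt)$, yields $L(y)$ times the smoothed renewal mass of $(y-h,y+h]$ tending to $2h$, the factor $2h$ arising from $\lim_{t \to 0}\sin(ht)/t = h$ combined with the symmetrization factor. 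Removing the smoothing via the standard monotone comparison, letting $T \to \infty$ so that the density $j$ becomes arbitrarily concentrated, transfers the conclusion to $U(y+h) - U(y-h)$.

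The main obstacle is controlling the tail integral on $[N, n\pi]$ (resp.~its nonarithmetic analogue): a naive Riemann--Lebesgue bound gives only $o(1)$, whereas we need $o(1/L(n))$. The fix is that after one integration by parts the integrand carries a factor $H(t) = O(1/L(1/t))$; a second integration by parts then reduces the estimate to the total variation of $H$, which Lemma~\ref{lemma:a1-L} controls at rate $O(1/L(n))$, combined with an oscillatory factor of mean zero. The closing observation is that $\ell(x)/L(x) \to 0$ in the infinite-mean case (the imaginary part of $1-\varphi$ strictly dominates the real part), which supplies the extra $o(1)$ needed to push the tail below $1/L(n)$.
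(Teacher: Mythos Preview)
Your approach matches the paper's: the paper does not spell out a proof of Theorem~\ref{thm:srt-a1} but simply records that it is a special case of results in \cite{Uchi,Uchi2} and that the argument is Erickson's \cite{Erickson70} combined with Lemma~\ref{lemma:a1-L}. That is precisely your plan --- feed the slow variation of $H(t)=\int_0^t W(s)\,\dd s$ from Lemma~\ref{lemma:a1-L} into Erickson's Fourier scheme --- so there is nothing substantively different to compare.

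One remark on the sketch: the phrase ``a second integration by parts then reduces the estimate to the total variation of $H$'' is not quite the mechanism that closes the tail bound. Undoing your first integration by parts on $[N,n\pi]$ returns you to $\tfrac{2}{\pi}\int_{N/n}^{\pi} W(t)\cos(nt)\,\dd t$, so one has not gained anything structurally. What actually makes the tail $o(1/L(n))$ in Erickson's argument is the pointwise information on $W$ coming from Theorem~\ref{thm:phi-asy-gen} in the $\alpha=1$ one-sided case: $\Re(1-\varphi(t))\asymp t\,\ell(1/t)$ while $|\Im\varphi(t)|\sim t\,L(1/t)$, so $W(t)\le C\,\ell(1/t)/(t\,L(1/t)^2)$. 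Combined with $\ell/L\to 0$ (which you correctly flag at the end), this is what pushes the oscillatory tail below $1/L(n)$; the ``second integration by parts'' language should be replaced by this pointwise bound. With that adjustment your outline is sound.
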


\section{Renewal function in the semistable setting}

In this section we determine the asymptotic of 
$U(y)$, as $y\to\infty$ for any $\alpha \in (0,1)$.
This time we will not exploit the LLT, but simply the merging 
result~\eqref{eq:mergechar} in terms of the characteristic function.
In short, the basic observation is that the semistable limit theorem,
equivalently  the merging result~\eqref{eq:mergechar},
is the only thing one needs to obtain the asymptotic of  
$U(y)$ for both arithmetic and nonarithmetic semistable distributions.
This type of argument is not needed (although it makes sense)  
to obtain the asymptotic of $U(y)$ in the regularly varying (stable)
setting where Karamata's Tauberian theorem gives immediate results.

Recall that $G_{\gamma_k}$ is the semistable distribution 
defined in \eqref{eq:semistable-chf-df-lambda}.
We note that
\begin{align*} % \label{eq:Gfin}
\int_{0}^{\infty}
G_{\gamma(B(y) x^{-\alpha})} ( x) x^{-\alpha -1} \,
\dd x <\infty.
\end{align*}
At $\infty$ this is clear, while at 0 this follows from
the fact that $G_\gamma(x)$ is exponentially small around 0,
see Theorem 1 by Bingham \cite{Bingham2} (or Lemma 2 in \cite{KT}).

\begin{theorem}
\label{thm:Uasy-sem}
Assume that $X$ is a nonnegative random variable and 
\eqref{eq:dgp-distf} holds with $\alpha\in (0,1)$.
Set $B(x)= x^\alpha\ell(x)^{-1}$. Then
\begin{equation*}
\lim_{y\to\infty}
\Big| y^{-\alpha}\ell(y) U(y)
- \alpha \int_{0}^{\infty}
G_{\gamma(B(y) x^{-\alpha})} ( x) x^{-\alpha-1} \, 
\dd x \Big| =0.
\end{equation*}
\end{theorem}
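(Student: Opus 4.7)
The plan is to start from $U(y) = \sum_{n=0}^\infty \P(S_n \leq y)$, apply the merging result \eqref{eq:merge} to replace each $\P(S_n \leq y)$ by $G_{\gamma_n}(y/A_n)$ up to an error $\eta_n$ satisfying $\sup_{k \geq n}|\eta_k| \to 0$, and then convert the resulting series $\sum_n G_{\gamma_n}(y/A_n)$ into the displayed integral via a Riemann-sum argument under the substitution $x = y/A_n$. To carry this out I fix a large $L > 1$ and split the sum into the three ranges $n \leq B(y)/L^\alpha$, $B(y)/L^\alpha < n \leq L^\alpha B(y)$, and $n > L^\alpha B(y)$. On the middle range the merging error contributes only $o(B(y))$, since $\eta_n \to 0$ and the range contains $O(B(y))$ terms.

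The main step is the Riemann-sum conversion on the middle range. Under $x = y/A_n$ one has $n = B(y/x)$ by asymptotic inversion, and since $B(u) = u^\alpha/\ell(u)$ is regularly varying of index $\alpha$, the Uniform Convergence Theorem yields $B(y/x) \sim B(y)\, x^{-\alpha}$ uniformly on compact subsets of $(0,\infty)$, together with $|dn/dx| \sim \alpha B(y) x^{-\alpha-1}$. This turns the middle sum into
\[
\alpha B(y) \int_{L^{-1}}^{L} G_{\gamma(B(y)\, x^{-\alpha})}(x)\, x^{-\alpha-1}\, \dd x + o(B(y)).
\]
Although $\gamma(\cdot)$ has jumps at each $k_m$, the periodicity $G_{c\lambda} \equiv G_\lambda$ implies that $G_{\gamma(n)}$ is continuous in $n$ (the jump $\gamma(k_m) = 1 \mapsto \gamma(k_m+1) \approx c^{-1}$ lies on a single orbit), so the Riemann-sum approximation is valid; the replacement of $\gamma(B(y/x))$ by $\gamma(B(y) x^{-\alpha})$ inside the integrand is justified by the continuous dependence of $G_\lambda$ on $\lambda$.

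For the tails, the range $n \leq B(y)/L^\alpha$ contributes at most $B(y)/L^\alpha$ to $U(y)$, while the matching integral over $x > L$ is $O(L^{-\alpha})$. For $n > L^\alpha B(y)$ the argument $y/A_n$ is $O(L^{-1})$; the exponential smallness of $G_\gamma(z)$ as $z \to 0$ cited in the text from Theorem 1 of \cite{Bingham2} makes the tail sum $o(B(y))$ uniformly as $L \to \infty$, and it similarly controls the integral over $x < L^{-1}$. Combining the three ranges, dividing by $B(y) = y^\alpha/\ell(y)$, and letting $L \to \infty$ yields the theorem. The main technical obstacle is the Riemann-sum conversion in the middle range, namely reconciling the jump discontinuities of $\gamma$ with the continuity required to identify the limit integrand; this is resolved via the $c$-periodicity of the family $(G_\lambda)_{\lambda > 0}$.
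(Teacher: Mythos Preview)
Your route is genuinely different from the paper's and more elementary. The paper works through Fourier inversion: it writes $U(n)$ as an integral involving $(1-\varphi)^{-1}$, localizes in the frequency variable $t$ using the bounds of Lemma~\ref{lemma:phi-prop}, then on the remaining $t$-range expands $\sum_k \varphi^k$ and applies the characteristic-function merging \eqref{eq:mergechar} to arrive at $\sum_k G_{\gamma_k}((n+1)/A_k)$, which is finally converted to the integral by a Riemann-sum argument as in Lemma~\ref{lemma-riemsum}. You instead apply the distribution-function merging \eqref{eq:merge} directly to $U(y)=\sum_n \P(S_n\le y)$ and go straight to the Riemann sum. This bypasses the Fourier analysis entirely, and in particular avoids the separate treatment of arithmetic and nonarithmetic distributions that the paper carries out.

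There is, however, a real gap in your treatment of the range $n>L^\alpha B(y)$. You invoke the exponential smallness of $G_\gamma(z)$ as $z\downarrow 0$ to control the tail sum, but that bounds $\sum_{n>L^\alpha B(y)} G_{\gamma_n}(y/A_n)$, not $\sum_{n>L^\alpha B(y)} \P(S_n\le y)$. Bridging the two via \eqref{eq:merge} costs an error $\eta_n$ per term, and since the range is infinite and $\eta_n\to 0$ is not summable, the total merging error on this range is uncontrolled. (This is exactly why the paper handles the large-$k$ tail with the exponential bound $|\varphi(t)|^k\le e^{-\nu_1 k t^\alpha\ell(1/t)}$ from Lemma~\ref{lemma:phi-prop}(i) rather than with merging.) In your framework the gap is easily closed without any Fourier input: since $X\ge 0$ one has $\P(S_n\le y)\le F(y)^n\le e^{-n\,\overline F(y)}$, and by \eqref{eq:df-ass} $\overline F(y)\ge c/B(y)$ for some $c>0$, so
\[
\sum_{n>L^\alpha B(y)}\P(S_n\le y)\ \le\ \frac{e^{-cL^\alpha}}{1-e^{-\overline F(y)}}\ \le\ C\,B(y)\,e^{-cL^\alpha}.
\]
But this is a separate argument from merging and must be stated; as written, your tail estimate does not go through, and your claim that the Riemann-sum step is ``the main technical obstacle'' overlooks this point.
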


As a consequence of Theorem \ref{thm:srt-a1}  
we obtain for $\alpha = 1$ the following.

\begin{corollary}
Assume that $X$ is a nonnegative random variable such that 
\eqref{eq:dgp-distf} holds with $\alpha=1$ and $\E X = \infty$.
Then, as $y \to \infty$
\[
U(y) \sim \frac{y}{L(y)}.
\]
\end{corollary}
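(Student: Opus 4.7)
\medskip
\noindent\textbf{Proof plan.} The strategy is to pass from the strong renewal theorem in Theorem~\ref{thm:srt-a1} (which controls the \emph{increments} of $U$) to the asymptotics of $U$ itself, by summing the increments along a suitable grid and invoking Karamata's theorem for the slowly varying function $1/L$. I would handle the lattice and nonarithmetic cases in parallel, since they reduce to the same discrete sum.

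In the lattice (span~$1$) case, Theorem~\ref{thm:srt-a1} gives $L(n)u_n \to 1$, so that $u_n \sim 1/L(n)$. Since $U(y) = \sum_{n \le y} u_n$ and $1/L$ is slowly varying, Karamata's theorem (applied to the slowly varying function $1/L$ with the exponent $\beta = 0$ in $\sum_{n \le N} n^{-\beta}/L(n) \sim N^{1-\beta}/((1-\beta)L(N))$) yields
\[
U(\lfloor y \rfloor) = \sum_{n=0}^{\lfloor y \rfloor} u_n \sim \frac{y}{L(y)},
\]
and $U(y) \sim U(\lfloor y\rfloor)$ by monotonicity and slow variation of $L$.

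For the nonarithmetic case, fix $h > 0$ (any convenient value, e.g.~$h=1$) and partition $[0,y]$ into the intervals $I_k = (2kh - h, 2kh + h]$ for $k=0,1,\dots,M$ with $M = \lfloor y/(2h)\rfloor$. Using the telescoping decomposition
\[
U(2Mh + h) = U(-h) + \sum_{k=0}^{M} \bigl( U(2kh+h) - U(2kh - h) \bigr),
\]
Theorem~\ref{thm:srt-a1} provides, for every $\eps > 0$, an index $k_0$ such that for $k \ge k_0$ the $k$-th summand lies between $(1 \pm \eps) \cdot 2h/L(2kh)$. Summing and comparing with $\int_0^y \dd u / L(u)$, Karamata's theorem (the continuous version, since $1/L$ is slowly varying) gives
\[
\sum_{k=0}^{M} \frac{2h}{L(2kh)} \sim \int_0^y \frac{\dd u}{L(u)} \sim \frac{y}{L(y)}.
\]
Letting $\eps \to 0$, both the upper and lower bounds match, and using monotonicity of $U$ to pass from $U(2Mh + h)$ to $U(y)$ (since $|U(y) - U(2Mh+h)| \le U(y+2h)-U(y-2h) = O(1/L(y)) = o(y/L(y))$), we obtain $U(y) \sim y/L(y)$.

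The only delicate point is ensuring that the error terms in the blockwise asymptotics can be summed without overwhelming the main term. This is where the slow variation of $L$ is essential: both the uniform ``$(1 \pm \eps)$'' factor on the tail blocks and the negligibility of the finitely many initial blocks (contributing $O(1)$) are absorbed into the dominant behavior $y/L(y) \to \infty$. I do not foresee any substantial obstacle, as the argument is a standard Karamata-type summation once the strong renewal theorem for the increments is in hand.
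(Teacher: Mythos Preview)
Your proposal is correct and is exactly the route the paper intends: the corollary is stated as an immediate consequence of Theorem~\ref{thm:srt-a1} with no separate proof, and the passage from the increment asymptotics to $U(y)\sim y/L(y)$ is the standard Karamata summation you describe. One small terminological point: your ``lattice (span~$1$) case'' should be the \emph{arithmetic} (integer-valued, span~$1$) case, since nonarithmetic lattice distributions fall under the second clause of Theorem~\ref{thm:srt-a1}; otherwise the argument is complete.
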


It is natural to expect 
that under some additional assumption the SRT in Theorems \ref{thm:SRTGL} and 
\ref{thm:SRTEr} remains true for $\alpha \in (0, 1/2]$. 
The problem to find the necessary and sufficient conditions for the SRT
in the regularly varying setup was open for more than 50 years, and  was solved 
recently by Caravenna and Doney \cite{CaravennaDoney}.
In the regularly varying setup, already 
in the first papers \cite{GL, Erickson70} it was pointed out that 
for $\alpha \in (0, 1/2]$ the results hold with $\liminf$ instead of $\lim$,
moreover the exceptional set is negligible in the sense that has density 0.

We do not know what happens for $\alpha \in (0,1/2]$. We only point out 
the essential difficulty to obtain further asymptotics.
By Lemma \ref{lemma-riemsum} for any $\alpha \in (0,1)$
\begin{equation} \label{eq:liminf}
\liminf_{n \to \infty} 
\left[ n^{1-\alpha } \ell(n) \, u_n 
- \alpha \int_0^\infty g_{\gamma(B(n) y^{-\alpha})}(y) y^{-\alpha} 
\, \dd y \right] \geq 0.
\end{equation}
In the regularly varying case \eqref{eq:liminf}, 
together with Theorem \ref{thm:Uasy-sem},
is enough to conclude that for $\alpha \in (0,1/2]$
the liminf in \eqref{eq:liminf} is 0,
moreover the limit exists and equals 0 except in a set of density
0; see \cite[Theorem 1.1]{GL}, \cite[Theorem 2]{Erickson70}, or
\cite[Theorem 8.6.6]{BGT}. If $G$ is any distribution function of
a nonnegative random variable with density $g$, then simply
\[
\alpha \int_0^\infty G(x) x^{-\alpha - 1} \, \dd x =
\int_0^\infty g(x) x^{-\alpha } \, \dd x.
\]
In our case the distribution function itself depends on
$x$, thus the argument above does not work.

\smallskip

\section{Proofs} \label{sect:proofs}

\subsection{Proof of Theorem \ref{thm:phi-asy-gen}}

%\begin{proof}{Proof of Theorem \ref{thm:phi-asy-gen}}
\textbf{Case 1:} $\alpha \in (0,1)$.
Integration by parts shows
\begin{equation} \label{eq:phi-form}
\begin{split}
& 1 - \varphi(t) = \int_{[0,\infty)} (e^{\ii t x} -1 )
\, \dd \overline F(x)
+ \int_{(0,\infty)} (e^{-\ii t x} -1 ) \, \dd F(-x) \\
& = - \ii t 
\left( 
\int_{0}^{\infty -} \overline F(x) e^{\ii t x} \, \dd x 
- \int_{0}^{\infty -} F(-x) e^{-\ii t x} \, \dd x \right) \\
& = - \ii \sgn (t)  |t| ^{\alpha} 
\int_0^{\infty-}  \ell( \frac{y}{|t|} )  y^{-\alpha} 
\left(  h( \frac{y}{|t|} ) e^{\ii \sgn (t) y}
- k( \frac{y}{|t|}) e^{-\ii \sgn(t) y} \right)  \dd y.
\end{split}
\end{equation}
To ease notation we write $x = |t|^{-1}$. We consider the first term 
in the integral above, and assume $t > 0$.
For any $0 < a < b < \infty$ by the uniform convergence theorem for slowly
varying functions as $x \to \infty$
\begin{equation*} %\label{eq:a-bint}
\frac{1}{\ell(x)}
\int_a^b h(yx) \ell(y x) y^{-\alpha } e^{\ii y} \dd y 
- \int_a^b h(yx) y^{-\alpha} e^{\ii y } \dd y \to 0.
\end{equation*}
Next we show that the contribution of the integral
on $(0,a)$, and on $(b, \infty)$ is negligible. Indeed, by Karamata's theorem
\begin{equation} \label{eq:0-aint}
\begin{split}
\left| \int_0^a h(yx) \ell(yx) y^{-\alpha } e^{\ii y} \dd y \right|
& \leq C \, x^{\alpha-1} \int_0^{ax} \ell(u) u^{-\alpha} \, \dd u \\
& \sim C \, a^{1-\alpha} \ell(x) \quad \text{as } x \to \infty.
\end{split}
\end{equation}
In the following $C > 0$ is always a finite positive constant, which may be 
different from line to line, and its actual value is not important for us.
On $(b, \infty)$ we consider only the real part. 
Since the function $\overline F(x) = \ell(x) h(x) x^{-\alpha}$
is nonincreasing, by the second mean value theorem for definite
integrals  we obtain
\begin{equation} \label{eq:b-infint}
\begin{split}
\left| \int_{b}^{\infty -}
h(yx) \ell(yx) y^{-\alpha } \cos y \, \dd y \right| 
&\leq h(bx) \ell(bx) b^{-\alpha} \, 
\sup_{z > b} \left| \int_b^z \cos y \, \dd y \right| \\
& \leq C \, \ell(x) b^{-\alpha}.
\end{split}
\end{equation}
Clearly, the inequalities \eqref{eq:0-aint} and \eqref{eq:b-infint}
hold true for the second term in \eqref{eq:phi-form}, therefore
\[
\left| \frac{1-\varphi(t)}{|t|^\alpha \ell(1/|t|)} \right| \leq 
C \left( 
a^{1-\alpha} +  b^{-\alpha} + \int_a^b y^{-\alpha} \, \dd y \right),
\]
showing the first part of the theorem. 
\smallskip

For the more precise asymptotic
first note that with the extra monotonicity condition the function
$p_2$ is well-defined. This follows from the
Leibniz criterion for the finiteness of an alternating series, 
recalling the fact that $h(y) y^{-\alpha}$ and $k(y) y^{-\alpha}$
are ultimately nonincreasing. Moreover, the inequalities \eqref{eq:0-aint} and 
\eqref{eq:b-infint}
hold true  with $\ell(x) \equiv 1$. Therefore
\[
\begin{split}
& \left| \frac{1}{\ell(x)} \int_0^{\infty-} h(xy) \ell(xy) y^{-\alpha}
e^{\ii y} \, \dd y - 
\int_0^{\infty-} h(xy) y^{-\alpha} e^{\ii y} \, \dd y \right| \\
& \leq C (a^{1- \alpha} + b^{-\alpha}) + 
\left| \frac{1}{\ell(x)} \int_a^b h(yx) \ell(yx) y^{-\alpha } 
e^{\ii y} \, \dd y 
- \int_a^b h(yx) y^{-\alpha } e^{\ii y} \, \dd y
\right|,
\end{split}
\]
and the statement follows by letting
$x = 1/|t| \to \infty$, then $a \to 0$ and $b \to \infty$.

\smallskip
\textbf{Case 2:} $\alpha \in (1, 2)$. 
In this case $\E X $ exists, and by 
subtracting, and using that $\E e^{\ii t X} = 1 + \ii t \E X + o(t)$
as $t \downarrow 0$, we may and do assume that $\E X = 0$.
Similarly as in \eqref{eq:phi-form}
\begin{equation*} %\label{eq:phi-form>1}
\begin{split}
& 1 - \varphi(t)  
= \int_{\R} \left( 1 -e^{\ii t x} + \ii t x \right) \dd F(x) \\
& = - \ii \sgn(t) |t|^\alpha \int_0^\infty 
\frac{\ell(y/|t|)}{y^\alpha}
\left( 
(e^{\ii \sgn(t) y } - 1) h(y/t) - 
(e^{-\ii \sgn(t) y } - 1) k(y/t) \right) \dd y.
\end{split}
\end{equation*}
As above, 
for any $0 < a < b < \infty$ as $x = |t|^{-1} \to \infty$
\begin{equation*} %\label{eq:a-bint>1}
\frac{1}{\ell(x)}
\int_a^b h(yx) \ell(y x) y^{-\alpha } (e^{\ii y} -1) \, \dd y 
- \int_a^b h(yx) y^{-\alpha} (e^{\ii y } - 1) \, \dd y \to 0.
\end{equation*}
Next we show that the contribution of the integral
on $(0,a)$ and on $(b, \infty)$ is negligible.
For $y$ small enough $e^{\ii y} - 1 \sim \ii y$, thus
 by Karamata's theorem
\begin{equation} \label{eq:0-aint>1}
\begin{split}
\left| \int_0^a h(yx) \ell(yx) y^{-\alpha } (e^{\ii y} - 1) \, \dd y \right|
& \leq C \, \int_0^a \ell(yx) y^{1 -\alpha } \, \dd y \\
& \sim C \, a^{2-\alpha} \ell(x) \quad \text{as } x \to \infty.
\end{split}
\end{equation}
Similarly, on $(b, \infty)$ we have
\begin{equation} \label{eq:b-infint>1}
\begin{split}
\left| \int_{b}^{\infty}
h(yx) \ell(yx) y^{-\alpha } (e^{\ii y} -1)  \, \dd y \right| 
& \leq C \ell(x) b^{1-\alpha}.
\end{split}
\end{equation}
Since the inequalities \eqref{eq:0-aint>1} and \eqref{eq:b-infint>1}
hold with $\ell(x) \equiv 1$, therefore
\[
\begin{split}
& \left| 
\frac{1}{\ell(x)} \int_0^\infty y^{-\alpha}  h(xy) \ell(xy) 
(e^{\ii y} - 1) \, \dd  y 
- \int_0^\infty y^{-\alpha} h(xy) (e^{\ii y} - 1) \, \dd y
\right| \\
&  \leq 
C \left( 
a^{2-\alpha} +  b^{1-\alpha} \right)
+ \left| \frac{1}{\ell(x)} \int_a^b h(yx) \ell(yx) 
y^{-\alpha } e^{\ii y} \, \dd y 
- \int_a^b h(yx) y^{-\alpha } e^{\ii y} \, \dd y
\right|,
\end{split}
\]
and statement follows by letting 
$x = 1/|t| \to \infty$, then $a \to 0$ and $b \to \infty$.

\smallskip 
\textbf{Case 3:} $\alpha =1$. In this case the calculations are
more troublesome. Using that 
\[
\int_{(-1,1]} x \, \dd F(x) = \int_{0}^1 
[ \overline F(x) - F(-x) ] \, \dd x
- \overline F(1) + F(-1)
\]
and that $e^{\ii t x} - 1 -\ii t x = O(t^2)$ for $x \in [-1,1]$,
straightforward calculation shows
\begin{equation} \label{eq:a1-phi}
\begin{split}
& 1 - \varphi (t) 
= \int_{\R} ( 1 - e^{\ii t x} ) \, \dd F(x)  \\
& = \! - \ii t \int_{1}^{\infty-} \hspace{-6pt}
\left( \overline F(x) e^{\ii t x} \! 
-  \! F(-x) e^{-\ii tx} \right) \! \dd x \! -
\ii t \hspace{-4pt}
\int_0^1 \! [ \overline F(x) \! - F(-x) ]  \dd x + O(t^2) \\
& = - \ii \sgn(t) |t| \int_{|t|}^{\infty-}
\frac{\ell(y/|t|)}{y} 
\left[ h (y/|t|) e^{\ii \sgn(t) y} 
- k(y/|t|) e^{- \ii \sgn(t) y} \right] \dd y \\
& \quad - \ii t \int_0^1 [\overline F(x) - F(-x)] \, \dd x + O(t^2). 
\end{split}
\end{equation}
In this case
the order of the real and imaginary parts are different. 
As $\sin y \sim y$ at 0, using 
the arguments in \eqref{eq:0-aint} and \eqref{eq:b-infint} 
we have
\begin{equation*} % \label{eq:a1-re}
\left| 
\frac{1}{\ell(1/|t|)} \int_{|t|}^{\infty-}
\frac{\sin y}{y} \ell(y/|t|)
h(y/|t|) \, \dd y - 
\int_0^b \frac{\sin y}{y} h(y/|t|) \, \dd y \right| 
\leq C b^{-1},
\end{equation*} 
for $t$ small enough, for some $C > 0$. 
Moreover, if $h(y) y^{-1}$ is ultimately monotone
this can be strengthened to
\begin{equation*} % \label{eq:a1-re2}
\left| 
\frac{1}{\ell(1/|t|)} \int_{|t|}^{\infty-} \frac{\sin y}{y} \ell(y/|t|)
h(y/|t|) \, \dd y - \int_0^{\infty-} 
\frac{\sin y}{y} h(y/|t|) \, \dd y \right| 
\to 0
\end{equation*}
as $t \to 0$. Thus the statement for the real part follows.

For the imaginary part in \eqref{eq:a1-phi}
we obtain as in \eqref{eq:b-infint}
\[
\left| 
\int_1^{\infty-} \frac{\cos y}{y} \ell(y/|t|) h(y/|t|) \,
\dd y \right|
\leq C \ell(1/|t|),
\]
while
\[
\int_{|t|}^1 \frac{\cos y}{y} \ell(y/|t|) h(y/|t|) 
\, \dd y 
\sim \int_1^{1/|t|} \frac{\ell(y) h(y) }{y} 
\, \dd y =: L_h(1/|t|).
\]
If $h$ is nonzero then $L_h(x) / \ell(x) \to \infty$ as $x \to \infty$. To
see this write
\[
\liminf_{x \to \infty}
\frac{L_h(x)}{\ell(x)} \geq 
\liminf_{x \to \infty} \int_{\varepsilon x}^x \frac{\ell(u)}{\ell(x)}
\frac{h(u)}{u} \dd u \geq \inf h \, \log \varepsilon^{-1},
\]
as $\varepsilon \downarrow 0$ the claim follows. Moreover, $L_h$ is
slowly varying. Indeed, for $\lambda > 1$ fixed
\[
\begin{split}
L_h(\lambda x) - L_h(x) & 
= \int_{x}^{\lambda x} \frac{\ell(u) h(u)}{u} \dd u \\
& \sim \ell(x) \int_x^{\lambda x} \frac{h(u)}{u} \dd u \leq 
\ell (x) \log \lambda \, \sup h.
\end{split}
\]
Since $\ell(x) / L_h(x) \to 0$, we have $L_h(\lambda x)/ L_h(x) \to 1$,
that is, $L_h$ is slowly varying. The same argument shows that $L$ is
slowly varying too. 
(We note that in (2.6.34) in \cite{IL} it is wrongly stated
that $L_h(x) \sim \ell(x) \log x$.)
The bound for the imaginary part
follows from the inequality $L_h(x) \leq C L(x)$. Finally, if $k \equiv 0$
then $L_h \equiv L$.

\smallskip
\textbf{Strict positivity of the real part.}
The following argument works for any $\alpha \in (0,2)$.
Let $a_0> 0$ be a small number, chosen later. Using that
$\sin y > 2 y / \pi$ for $y \in (0,\pi/2)$ we have
\[
\begin{split}
\Re ( 1 -\varphi(t) ) & = \int_0^\infty (1  - \cos tx) \dd F(x) \\
& = \int_0^\infty 2 \sin^2 \, \frac{tx}{2} \, \dd F(x) \\
& \geq 2 \int_{a_0/t}^{\pi/t} \left( \frac{tx}{\pi} \right)^2 \, \dd F(x) \\
& \geq \frac{2}{\pi^2} a_0^2  \left[ \overline F(a_0/t) - \overline F(\pi/t) \right] \\
& \geq t^\alpha \ell(1/t) \frac{2 a_0^2}{\pi^2} 
\left[ \frac{h(a_0/t)}{a_0^\alpha} \frac{\ell(a_0/t)}{\ell(1/t)} - 
\frac{h(\pi/t)}{\pi^\alpha} \frac{\ell(\pi/t)}{\ell(1/t)} \right].
\end{split}
\]
Since $\ell$ is slowly varying $\ell(\lambda /t ) / \ell(1/t) \to 1$ for any $\lambda$,
therefore the expression in the bracket is strictly positive for $a_0 > 0$ small enough.

\subsection{Local limit theorems}

Before the proof of the LLTs
 we collect some important facts on the characteristic
function $\varphi$, which we use later.

\begin{lemma} \label{lemma:phi-prop}
Let $X$ be an integer valued random variable with span 1
such that 
\eqref{eq:df-ass} holds. Let $\varphi(t) = \E e^{\ii t X}$ denote its
 characteristic function. Then
there exist positive numbers $\nu_1$, $\nu_2$, $\nu_3$
%$\nu_{\alpha'}$, $\alpha' \in (0,\alpha)$,  
such that
\begin{itemize}
\item[(i)]
if $\alpha \in (0,2)$ then
$| \varphi(t) | \leq e^{- \nu_1 |t|^{\alpha} \ell(1/|t|)}$,   
for $t \in [-\pi, \pi]$. 

\item[(ii)]
if $\alpha \in (0,1)$ then
$| ( 1 - \varphi(t) )^{-1} | \leq \nu_2 |t|^{-\alpha}  \ell(1/t)^{-1}$, 
for $t \in [-\pi, \pi]$;

\item[(iii)] 
if $\alpha \in (0,1)$ then
$| \varphi(t+h) - \varphi(t) | \leq \nu_3 |h|^{\alpha} \ell(1/|h|)$, for 
$t \in \R$, $h \in [-1, 1]$, and if 
$\alpha = 1$ then 
$| \varphi(t+h) - \varphi(t) | \leq
\nu_3 |h| L(1/|h|)$.
\end{itemize}

In the nonlattice case (i)--(iii) remain valid and 
(i)--(ii) can be extended to any compact interval.
\end{lemma}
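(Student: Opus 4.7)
The plan is to treat the three parts separately, relying on the sharp bounds on $\varphi$ near $0$ from Theorem \ref{thm:phi-asy-gen} for small $|t|$, and on the span-$1$ lattice assumption together with a compactness argument to extend the bounds to all of $[-\pi,\pi]$.

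For (i) I start from the identity
\[
|\varphi(t)|^2 = 1 - 2\Re(1-\varphi(t)) + |1-\varphi(t)|^2.
\]
The \emph{liminf} statement at the end of Theorem \ref{thm:phi-asy-gen} gives $\Re(1-\varphi(t)) \geq c|t|^\alpha \ell(1/|t|)$ near $0$, while the upper bounds in the same theorem show that $|1-\varphi(t)|^2$ is of strictly lower order: for $\alpha \in (0,1)$ it is $O(|t|^{2\alpha}\ell(1/|t|)^2)$; for $\alpha = 1$ the imaginary part is of order $|t|L(1/|t|)$ and $|t|L(1/|t|)^2 = o(\ell(1/|t|))$ by slow variation; for $\alpha \in (1,2)$ the dominant contribution to $\Im \varphi$ is $t\,\E X$ and $t^{2-\alpha} = o(\ell(1/|t|))$. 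Hence $|\varphi(t)|^2 \leq 1 - c|t|^\alpha \ell(1/|t|)$ on some $[-\delta,\delta]$, and (i) follows on this subinterval via $\sqrt{1-u} \leq e^{-u/2}$. On $[-\pi,\pi]\setminus[-\delta,\delta]$ the span-$1$ hypothesis combined with continuity gives $|\varphi(t)| \leq 1-\eta$ for some $\eta > 0$, and since $|t|^\alpha \ell(1/|t|)$ is bounded on this range the exponential bound extends after enlarging $\nu_1$.

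Part (ii) is an immediate consequence of the lower bound $|1-\varphi(t)| \geq \Re(1-\varphi(t)) \geq c|t|^\alpha \ell(1/|t|)$ for $|t| \leq \delta$, combined with the fact that the span-$1$ assumption keeps $|1-\varphi(t)|$ bounded away from zero on $[-\pi,\pi]\setminus[-\delta,\delta]$ by compactness. For (iii) I use the direct estimate
\[
|\varphi(t+h)-\varphi(t)| = |\E e^{\ii tX}(e^{\ii hX}-1)| \leq 2\E|\sin(hX/2)| \leq \E \min(|hX|, 2),
\]
and a Fubini computation yields $\E \min(|hX|,2) = |h|\int_0^{2/|h|}(\overline F(u)+F(-u))\,\dd u$. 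Applying the tail assumption \eqref{eq:df-ass} and Karamata's theorem then produces the bounds $\nu_3|h|^\alpha \ell(1/|h|)$ for $\alpha \in (0,1)$ and $\nu_3|h|L(1/|h|)$ for $\alpha = 1$.

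The nonlattice case requires no new argument once one notes that $|\varphi(t)| < 1$ for every $t \neq 0$, so the compactness step works on any compact interval, which is precisely what is needed to extend (i)--(ii). The main obstacle is the bookkeeping in (i) for $\alpha \in [1,2)$: the identity for $|\varphi|^2$ mixes real and imaginary contributions, and one must verify quantitatively that $|1-\varphi(t)|^2$ is dominated by $\Re(1-\varphi(t))$, which hinges on $\alpha < 2$ and the slow variation of $\ell$; the $\alpha=1$ subcase is a shade more delicate since the slowly varying function $L$ rather than $\ell$ governs $\Im \varphi$.
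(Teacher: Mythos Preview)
Your argument is correct and follows essentially the paper's route---Theorem \ref{thm:phi-asy-gen} near zero, the span-$1$ compactness argument away from zero, and your explicit computation for (iii) is precisely the ``classical argument'' the paper cites from \cite{GL} and \cite{Erickson70}; the only cosmetic difference is that the paper phrases (i) via $|\varphi|=e^{\Re\log\varphi}$ together with $\log\varphi\sim\varphi-1$ rather than your quadratic identity for $|\varphi|^2$. One wording slip: to extend (i) from $[-\delta,\delta]$ to $[-\pi,\pi]$ you must \emph{shrink} $\nu_1$ (so that $e^{-\nu_1 M}\geq 1-\eta$ where $M=\sup_{|t|\in[\delta,\pi]}|t|^\alpha\ell(1/|t|)$), not enlarge it.
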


%\begin{proof}
\noindent \textit{Proof} \ 
Using that $\varphi(t) = e^{\Re \log \varphi(t)}$, and 
$\log \varphi(t) \sim \varphi(t) -1$ around zero, 
the first three statements follows from Theorem \ref{thm:phi-asy-gen}
for $|t|$ small.
Possibly changing the constant, we can extend the inequality to the desired interval.

The fourth inequality follows from
\eqref{eq:df-ass} together with a classical argument; see, for 
instance,~\cite[Proof of Lemma 3.3.2]{GL} or Lemma 5 in \cite{Erickson70}.
\qed
%\end{proof}

%\begin{proof}[Proof of Theorem \ref{thm:local-sst}]
\noindent \textit{Proof of Theorem \ref{thm:local-sst}} \ 
Using the inversion formula \eqref{eq:latt-inv} we have
\[
\begin{split}
\P ( S_n = k) 
& = \frac{1}{2 \pi A_n} \int_{-A_n \pi}^{A_n \pi} e^{- \ii t k / A_n}
\varphi(t/A_n)^n \, \dd t.
\end{split}
\]
By the density inversion theorem the limiting density can be written as
\begin{equation} \label{eq:dens-inv}
g_\lambda(x) = \frac{1}{2 \pi} \int_{-\infty}^\infty e^{-\ii t x} 
\psi_\lambda(t) \, \dd t.
\end{equation}
Thus
\[
2 \pi \, | A_n\, \P(S_n = k) - g_{\gamma_n}((k-C_n)/A_n) | 
\leq I_1 + I_2 + I_3 + I_4,
\]
where
\begin{equation} \label{eq:i-s}
\begin{split}
I_1 & = \int_{-K}^K 
\left| e^{-\ii t C_n / A_n} \varphi(t/A_n)^n - \psi_{\gamma_n} (t) 
\right| \dd t \\
I_2 & = \int_{K \leq |t| \leq \varepsilon A_n} |\varphi(t/A_n)|^n
\, \dd t \\
I_3 & = \int_{\varepsilon A_n \leq |t | \leq \pi A_n} 
|\varphi(t/A_n)|^n \, \dd t \\
I_4 & = \int_{|t| > K} |\psi_{\gamma_n}(t) | 
\, \dd t,
\end{split}
\end{equation}
where $K> 0$ is a large constant.

By Theorem 3.1 in \cite{KCs} the merging relation \eqref{eq:merge} holds
if and only if for any $t \in \R$ as $n \to \infty$
\begin{equation} \label{eq:mergechar}
\E e^{\ii t (S_n - C_n)/ A_n } - \E e^{\ii t V_{\gamma_n}} 
= e^{-\ii t C_n / A_n} \varphi(t/A_n)^n - \psi_{\gamma_n}(t) \to 0.
\end{equation}
Moreover, since both $((S_n - C_n)/A_n)_n$ and $(V_{\gamma_n})_n$
are tight, the convergence in \eqref{eq:mergechar} is uniform on any
finite interval $[-K, K]$. Therefore $I_1 \to 0$ as $n \to \infty$ for
any $K > 0$.

To estimate $I_2$ we use Lemma \ref{lemma:phi-prop} (i) together with the 
Potter bounds. 
Using the inverse relation \eqref{eq:AB} we have
\[
\begin{split}
n (t / A_n )^\alpha \ell ( A_n / t) & =
n t^\alpha \frac{\ell( A_n /t )}{\ell( A_n )}
\frac{\ell(A_n)}{ A_n^\alpha} \\
& \sim t^\alpha \frac{\ell( A_n /t )}{\ell( A_n )} 
\geq 2^{-1} t^{\alpha'},
\end{split}
\]
for any $\alpha' \in (0, \alpha)$, where the last inequality follows from
the Potter bounds.
Therefore, for $\varepsilon > 0$ small enough
\[
I_2 \leq 
\int_K^\infty e^{- 2^{-1} \nu_1 \, t^{\alpha'} } \dd t,
\]
which goes to 0 as $K \to \infty$.

Since $X$ is lattice with span $1$
\begin{equation} \label{eq:lattice}
|\varphi(t)| \leq a < 1 \quad \text{for some }  \, a \in (0,1) \ 
\text{ for } \ |t| \in [\varepsilon, \pi].
\end{equation}
Therefore
$ %\[
I_3 \leq 2 \pi A_n a^{n},
$ %\]
while $\psi_{\gamma_n}(t)$ is uniformly integrable by (7) in \cite{Cs07}, 
implying that $\lim_{K \to \infty} I_4 = 0$.
\qed
%\end{proof}

%\begin{proof}
\noindent \textit{Proof of Theorem \ref{thm:llt-stone}} \ 
We only sketch the proof, because the arguments needed to extend Stone's original 
proof to the semistable case are essentially contained in the proof of Theorem 
\ref{thm:local-sst}.

Changing variables and using \eqref{eq-inversion} and \eqref{eq:dens-inv},
the difference
\[
2\pi \left| 
\frac{A_n}{2h} \P ( S_n + Y \in (x- h, x+h] ) - g_{\gamma_n}((x-C_n)/A_n)
\right|
\]
can be bounded exactly as in \eqref{eq:i-s}, with $TA_n$ instead of $\pi A_n$
in $I_3$. Now, $I_1, I_2$, and $I_4$ can be treated the same way as in the
lattice case, while for $I_3$ we use that by the nonlattice condition
$\sup_{|t| \in [\varepsilon, T]} |\varphi(t)| < 1$ for any $\varepsilon > 0$
and $T > 0$. Thus as $n \to \infty$
\begin{equation} \label{eq:stone-U}
\sup_{x \in \R} 
2\pi \left| 
\frac{A_n}{2h} \P ( S_n + Y \in (x- h, x+h] ) - g_{\gamma_n}((x-C_n)/A_n)
\right| \to 0.
\end{equation}

Using that $Y$ concentrates at 0 as $T \to \infty$, one can get rid of the $Y$
above as in \cite{Stone}. For completeness and later use, we include the 
argument here. Let $h > 0$ be fixed, and let $\delta > 0$. Putting 
$h^+ = ( 1 + \delta ) h$ we have by the independence of $Y$ and $S_n$,
\begin{equation}
\label{eq-SnUanyn}
\P ( S_n \in (x-h, x+h] ) \leq \frac{1}{\P( |Y| \leq \delta h)} 
\P  (S_n + Y \in (x-h^+, x + h^+] ).
\end{equation}
Thus
\begin{equation*} %\label{eq:stone-aux1}
\begin{split}
& \frac{A_n}{2h}  \P ( S_n \in (x-h, x+h] ) - g_{\gamma_n}((x-C_n)/A_n) \\
& \leq 
\left( \frac{A_n}{2h^+}  \P  (S_n + Y \in (x-h^+, x + h^+] ) - g_{\gamma_n}((x-C_n)/A_n)
\right) \\
& \quad + \frac{A_n}{2h^+}  \P  (S_n + Y \in (x-h^+, x + h^+] )
\left[ \frac{h^+}{h \P(|Y| \leq \delta h)} -1 \right].
\end{split}
\end{equation*}
By \eqref{eq:stone-U} the first summand tends to 0 as $n \to \infty$ 
for any $\delta$ and $T$. 
Using \eqref{eq:stone-U}  again,
and that $\sup_{\lambda > 0, x \in \R} g_\lambda(x) < \infty$, 
\begin{equation*} %\label{eq:stone-aux2}
\sup_{x \in \R} \frac{A_n}{2h^+}  \P  (S_n + Y\in (x-h^+, x + h^+] )
< \infty.
\end{equation*} 
Therefore, choosing first $\delta > 0$ small then $T$ large we obtain
\begin{equation} \label{eq:stone-upper}
\limsup_{n \to \infty}
\sup_{x \in \R}
\frac{A_n}{2h}  \P ( S_n \in (x-h, x+h] ) - g_{\gamma_n}((x-C_n)/A_n)  \leq 0.
\end{equation}
For the lower bound, putting $h^- = ( 1-\delta) h$, using also \eqref{eq:stone-upper}
\[
\begin{split}
& \P (S_n + Y \in (x-h^-, x+h^-])  
= \int_\R \P ( S_n + u \in (x-h^-, x+h^-]) j(u) \dd u \\
& \leq \P(S_n \in (x-h, x+h]) \P (|Y| \leq \delta h) + 
2 \sup_{\lambda > 0, x \in \R } g_\lambda(x) \, \frac{2h}{A_n} \,
\P ( |Y| > \delta h).
\end{split}
\]
Therefore, with $C = 4 \sup_{\lambda > 0, x \in \R } g_\lambda(x)$
\[
\begin{split}
& \P(S_n \in (x-h, x+h]) \geq 
\frac{\P (S_n + Y \in (x-h^-, x+h^-])}
{\P ( |Y| \leq \delta h)}
% \\ & \quad 
- C h \frac{\P( |Y| > \delta h)}{A_n \P ( |Y| \leq \delta h)}.
\end{split}
\]
Thus
\[
\begin{split}
& \frac{A_n}{2h} \P(S_n \in (x-h, x+h]) - g_{\gamma_n}((x-C_n)/A_n) \\
& \geq  \frac{A_n}{2h^-} \P(S_n + Y\in (x-h^-, x+h^-]) - g_{\gamma_n}((x-C_n)/A_n) \\
& \  + \frac{A_n}{2h^-} \P(S_n + Y \in (x-h^-, x+h^-]) 
\left( \frac{h^{-}}{h \P ( |Y| \leq \delta h)} -1 \right) %\\
%& \quad 
- C \frac{\P ( |Y| > \delta h)}{\P ( |Y| \leq \delta h )}.
\end{split}
\]
Choosing again first $\delta > 0$ small and then $T > 0$ large we obtain
\begin{equation*} % \label{eq:stone-lower}
\liminf_{n \to \infty}
\inf_{x \in \R}
\frac{A_n}{2h}  \P ( S_n \in (x-h, x+h] ) - g_{\gamma_n}((x-C_n)/A_n)  \geq 0,
\end{equation*}
completing the proof.

For later use, we note that 
the argument implies that for any $\varepsilon > 0$
there exists $T > 0$ such that for $n$ large enough
\begin{equation} \label{eq:Sn-U}
\sup_{x \in \R}
A_n |\P( S_n + Y \in (x-h, x+h] ) - \P( S_n \in (x-h,x+h] ) | \leq  \varepsilon.
\end{equation}
\qed
%\end{proof}

\subsection{Strong renewal theorems}

We need a continuity property of the densities
$g_\lambda(x)$, in $\lambda$. 
Recall the definition of the constant $c > 0$ in \eqref{eq:H}.
In the following result the interval
$[c^{-2}, c]$ could be replaced by any compact interval of $(0,\infty)$.
For our purpose anything larger than $(c^{-1}, 1]$ would suffice.

\begin{lemma} \label{lemma:g-dens}
There exists $\nu_4 > 0$ such that for any $\lambda_1, \lambda_2 \in 
[c^{-2}, c]$
\[
\sup_{x \in \R} | g_{\lambda_1}(x) - g_{\lambda_2}(x) | \leq 
\nu_4 \, | \lambda_1 - \lambda_2|.
\]
Moreover, 
\begin{equation} \label{eq:g'bound}
\sup_{\lambda \in (c^{-1}, 1]} \sup_{x \in \R}
\frac{\partial}{\partial x} g_\lambda(x) <\infty.
\end{equation}
\end{lemma}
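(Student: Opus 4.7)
The plan is to run the argument through the Fourier inversion
$g_\lambda(x) = \frac{1}{2\pi}\int_{-\infty}^\infty e^{-\ii t x}\psi_\lambda(t)\,\dd t$, the analogue of \eqref{eq:dens-inv}, so both assertions reduce to uniform-in-$\lambda$ estimates on $\psi_\lambda$. The first input is a uniform subexponential decay $|\psi_\lambda(t)| \le e^{-\nu |t|^\alpha}$ for some $\nu > 0$, valid for all $\lambda \in [c^{-2},c]$. I would obtain this from the strict-positivity argument at the end of the proof of Theorem~\ref{thm:phi-asy-gen} applied to the Lévy exponent $\Psi_\lambda$, using that the tail densities $M_R, M_L$ are bounded below on the shift orbit of the compact range of $\lambda^{1/\alpha}$ by log-periodicity, so that the constant $\nu$ can be taken uniform in $\lambda$.

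With this bound, \eqref{eq:g'bound} is automatic: differentiating $g_\lambda(x)$ in $x$ under the integral sign produces a factor of $-\ii t$, and $\int |t|\,e^{-\nu|t|^\alpha}\,\dd t < \infty$ dominates the result uniformly in $x$ and $\lambda \in (c^{-1},1]$.

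For the Lipschitz bound in $\lambda$, the starting point is the scaling relation $\Lambda_\lambda((x,\infty)) = \lambda\,\Lambda_1((\lambda^{1/\alpha}x,\infty))$, which after changing variables $y = \lambda^{1/\alpha}x$ in the Lévy--Khintchine integrand of \eqref{eq:def-sscharf} and absorbing the cutoff mismatch into the drift gives
\[
\Psi_\lambda(t) = \ii t\,b_\lambda + \lambda\,\Phi(t\lambda^{-1/\alpha}),\qquad \Phi(s) = \int_{-\infty}^\infty\!(e^{\ii s y}-1-\ii s y\,\ind{|y|\le 1})\,\Lambda_1(\dd y),
\]
with $\Phi$ independent of $\lambda$. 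Together with $|e^{z_1}-e^{z_2}|\le|z_1-z_2|\,e^{\max(\Re z_1,\Re z_2)}$, the goal becomes to bound $|\Psi_{\lambda_1}(t)-\Psi_{\lambda_2}(t)|$ by $C|\lambda_1-\lambda_2|\,(1+|t|^\alpha)$. The linear-in-$t$ piece $\ii t(b_{\lambda_1}-b_{\lambda_2})$ contributes $O(|t|\,|\lambda_1-\lambda_2|)$, using that $\lambda\mapsto b_\lambda$ is Lipschitz on $[c^{-2},c]$ (which is visible from the explicit formula in \cite[Theorem~1]{CsM}). The essential contribution $\lambda_1\Phi(t\lambda_1^{-1/\alpha})-\lambda_2\Phi(t\lambda_2^{-1/\alpha})$ I would handle by integration by parts in the definition of $\Phi$: after splitting off the cutoff and arguing as in \eqref{eq:phi-form}, the $\lambda$-dependence moves onto the smooth oscillatory kernel $e^{\ii t y}$, and a direct estimate then yields a bound of the form $C|\lambda_1-\lambda_2|(1+|t|^\alpha)$. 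Combining with the exponential decay of $|\psi_\lambda(t)|$ and integrating in $t$ against the integrable envelope $(1+|t|^\alpha)e^{-\nu|t|^\alpha}$ produces the desired Lipschitz bound on $g_\lambda$.

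The principal obstacle is that $M_R, M_L$ are only right-continuous, so $\Lambda_\lambda$ admits no classical $\lambda$-derivative in the vague topology: the dependence of $\Psi_\lambda$ on $\lambda$ is smoothed out only through the oscillatory integration. Consequently the key technical step is to make the integration-by-parts smoothing quantitative enough to recover the full Lipschitz rate (rather than merely a Hölder-$\alpha$ rate) in $|\lambda_1-\lambda_2|$. Separating the regimes $|t|\le 1$ and $|t|$ large, and using the factor $e^{-\nu|t|^\alpha}$ to absorb $|t|^\alpha$-type growth at infinity, is where the most care is needed.
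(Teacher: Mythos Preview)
The derivative bound \eqref{eq:g'bound} via Fourier inversion works as you describe. The gap is in the Lipschitz estimate. Your target inequality
\[
|\Psi_{\lambda_1}(t)-\Psi_{\lambda_2}(t)|\le C\,|\lambda_1-\lambda_2|\,(1+|t|^\alpha)
\]
fails in general when $\alpha<1$ and $M_R$ (or $M_L$) is discontinuous. Writing $s_j=t\lambda_j^{-1/\alpha}$, the difference contains the term $\lambda_2\bigl(\Phi(s_1)-\Phi(s_2)\bigr)$, and splitting the L\'evy integral at $|x|=|s_1-s_2|^{-1}$ gives only
\[
|\Phi(s_1)-\Phi(s_2)|\le C\,|s_1-s_2|^\alpha\le C\,|t|^\alpha\,|\lambda_1-\lambda_2|^\alpha,
\]
i.e.\ H\"older-$\alpha$ in $\lambda$, not Lipschitz. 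Concretely, if $\Lambda_1$ is purely atomic (atoms at $c^{n/\alpha}a$ with masses $\propto c^{-n}$), then $\Phi(s)=\sum_n(e^{\ii sc^{n/\alpha}a}-1)c^{-n}$ is a Weierstrass-type series: term-by-term differentiation in $s$ produces amplitudes $c^{n(1/\alpha-1)}\to\infty$, so $\Phi'(s)$ does not exist and no Lipschitz bound in $s$ (hence in $\lambda$) is available. Your integration-by-parts move only relocates the $\lambda$-dependence into $M_R(\lambda^{1/\alpha}u)$, which is still discontinuous in $\lambda$; the oscillation in $u$ does not by itself upgrade H\"older-$\alpha$ to Lipschitz. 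This is precisely the obstacle you flag, but the proposed resolution does not overcome it.

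The paper avoids this by a change of variables that decouples $\lambda$ from the argument of $\Phi$ \emph{before} differentiating. Substituting $s=t\lambda^{-1/\alpha}$ in the inversion integral gives
\[
g_\lambda(x)=\lambda^{1/\alpha}\,g\bigl(\lambda^{1/\alpha}(x+c_\lambda);\lambda\bigr),
\]
where $g(\,\cdot\,;\lambda)$ is the density with characteristic function $e^{\lambda y_1(s)}$. Now $\lambda$ enters $e^{\lambda y_1(s)}$ only as a scalar multiplier of the fixed function $y_1$, so $\partial_\lambda e^{\lambda y_1(s)}=y_1(s)\,e^{\lambda y_1(s)}$ is dominated by $(1+|s|^\alpha)e^{-\nu|s|^\alpha}\in L^1$ uniformly in $\lambda\in[c^{-2},c]$; this is exactly what Cs\"org\H{o}'s lemmas in \cite{Cs07} supply. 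The remaining $\lambda$-dependence in $g_\lambda$ sits in the \emph{spatial} argument $\lambda^{1/\alpha}(x+c_\lambda)$, where Lipschitzness follows from the $x$-smoothness of $g$ (the same estimate you already use for \eqref{eq:g'bound}) together with Lipschitzness of $\lambda\mapsto c_\lambda$. The missing idea in your sketch is this separation of the semigroup parameter from the spatial scaling.
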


%\begin{proof}
\noindent \textit{Proof} \ 
Introduce the notation 
$\psi_\lambda(t) = \E e^{\ii t V_\lambda} = e^{y_\lambda(t)}$. 
By formula (2.6) in \cite{PK1}
\begin{equation} \label{eq:y-scale}
y_\lambda(t) = \lambda y_1 (t/\lambda^{1/\alpha}) - \ii t c_\lambda,
\end{equation}
with
\[
 c_\lambda = \lambda^{(\alpha - 1)/ \alpha } \int_1^{1/\lambda} 
 \left[ \psi_2(s) - \psi_1(s) \right] \dd s,
\]
where 
$\psi_1(s)  = \inf \{ -x : M_L(x) x^{-\alpha} > s \}$,
$\psi_2(s) = \inf \{ -x : M_R(x) x^{-\alpha} > s \}.$
For any $\lambda > 0$ the function $e^{\lambda y_1(t)}$, $t \in \R$, is a 
characteristic function. Let $G(x ; \lambda)$ denote its 
distribution function, 
i.e.~$e^{\lambda y_1(t)} = 
\int_{\R} e^{\ii t x} G( \dd x; \lambda)$. Cs\"org\H{o} \cite{Cs07}
proved that these functions are infinitely many times differentiable
with respect to both variables. Let $g(x; \lambda)$ be the density 
of $G(x; \lambda)$.

Using the density inversion formula and  \eqref{eq:y-scale} we obtain
\begin{equation} \label{eq:g-form1}
\begin{split}
g_\lambda(x) & =  
\frac{1}{2\pi} \int_{-\infty}^\infty e^{-\ii t x } e^{y_\lambda(t)} \dd t \\
& = 
\lambda^{1/\alpha} \frac{1}{2\pi} 
\int_{-\infty}^\infty e^{-\ii s \lambda^{1/\alpha} (x + c_\lambda) }
e^{ \lambda y_1(s)} \dd s \\
& = \lambda^{1/\alpha} g\left( \lambda^{1/\alpha} (x + c_\lambda); \lambda \right).
\end{split}
\end{equation}
By Lemmas 1 and 2 in \cite{Cs07} for each $j,k$
\begin{equation} \label{eq:mixed-ineq}
\sup_{ \lambda \in [c^{-2}, c]} \sup_{x \in \R} 
\left| \frac{\partial^{j+k}}{\partial x^j \partial \lambda^k} G(x; \lambda) \right|
< \infty,
\end{equation}
which implies that for some constant $C > 0$, for any 
$\lambda_1, \lambda_2 \in [c^{-2}, c]$
\begin{equation*} % \label{eq:g-bound1}
| g(x; \lambda_1) - g(x; \lambda_2) | \leq C |\lambda_1 - \lambda_2|.
\end{equation*}
Using \eqref{eq:g-form1}
\begin{equation*} % \label{eq:Lip-g}
\begin{split}
& g_{\lambda_1} (x) - g_{\lambda_2}(x) = 
\lambda_1^{1/\alpha} \left[ 
g\big( \lambda_1^{1/\alpha}(x+ c_{\lambda_1}), \lambda_1 \big)
- g\big( \lambda_1^{1/\alpha}(x+ c_{\lambda_1}), \lambda_2 \big) \right] \\
& \quad + \lambda_1^{1/\alpha} \left[ 
g\big( \lambda_1^{1/\alpha}(x+ c_{\lambda_1}), \lambda_2 \big)
- g\big( \lambda_2^{1/\alpha}(x+ c_{\lambda_2}), \lambda_2 \big) \right] \\
& \quad + \big( \lambda_1^{1/\alpha} - \lambda_2^{1/\alpha} \big) 
g \big( \lambda_2^{1/\alpha} (x + c_{\lambda_2}); \lambda_2 \big).
\end{split}
\end{equation*}
Using \eqref{eq:mixed-ineq} with 
$j=k=1$, $j=2$, $k=0$, and $j=1$, $k=0$ respectively, and for the second term 
using also that $c_\lambda$ is Lipschitz in $\lambda \in [c^{-2}, c]$,
we obtain
\[
 |g_{\lambda_1} (x) - g_{\lambda_2}(x)  | \leq C |\lambda_1 - \lambda_2|,
\]
as claimed.
The uniform boundedness of the derivatives in \eqref{eq:g'bound}
follows simply from \eqref{eq:g-form1} and \eqref{eq:mixed-ineq}.
\qed
%\end{proof}

%\begin{proof}
\noindent \textit{Proof of Theorem \ref{thm:SRTGL}} \ 
We estimate $u_n$ via~\eqref{eq:un}. This is possible due to 
Theorem~\ref{thm:phi-asy-gen}, which ensures that 
$\Re\int_0^\pi(1-\varphi(t))^{-1} \dd t$ is well defined.
Let $L > 1$ be a large fixed number.
To ease notation, we suppress the $\lfloor \cdot \rfloor$ notation.
Write
\begin{align*}
\pi u_n&=\Re \int_0^{\pi}(1-\varphi(t))^{-1}\, e^{-\ii nt}\, \dd t\\
&=\left(\sum_{k<B(n/L^2)}+\sum_{k=B(n/L^2)}^{B(nL)}+\sum_{k>B(nL)}\right)
\Re \int_0^{\pi}\varphi(t)^{k}\, e^{-\ii nt}\, \dd t\\
&=: I_1+I_2+I_3.
\end{align*}

First, by Lemma~\ref{lemma-riemsum},
\begin{align}
\label{eq:main}
\limsup_{n \to \infty} \left| n^{1-\alpha} \ell(n) I_2 - 
\pi \alpha \int_{L^{-1}}^{L^2}  
g_{\gamma(B(n) x^{-\alpha})} ( x ) x^{-\alpha} 
\, \dd x \right|\le \frac{\pi}{L}.
\end{align}

Next we handle $I_3$. By Theorem \ref{thm:local-sst} for $k$ large enough
\[
 \sup_n \P (S_k = n ) \leq C \, A_k^{-1},
\]
with $C = 1 + \sup_{\gamma, x} g_\gamma(x)$. Therefore, using Karamata's theorem,
the inverse relation \eqref{eq:AB} and Potter's bounds we obtain 
for any $\varepsilon > 0$
\begin{equation} \label{eq:k22}
\begin{split}
I_3 &  \leq  \pi \sum_{k \geq B(nL)} C \, A_k^{-1} \\
& \sim C \pi \frac{\alpha}{1- \alpha} B(nL)^{1- 1/\alpha} \ell_1(B(nL))^{-1} \\
& \leq  C {n^{\alpha -1}} {\ell(n)^{-1}} L^{\alpha + \varepsilon -1}
\end{split}
\end{equation}
Note that the estimate works for $\alpha \in (0,1)$, the assumption $\alpha > 1/2$ 
is not needed at this point.

It remains to estimate $I_1$. We have
\begin{align*}
|I_1|&\leq \left|\sum_{k<B(n/L^2)} \int_0^{L/n} \varphi(t)^{k}\, e^{-\ii n t}\, 
\dd t\right|
+\left| \sum_{k<B(n/L^2)} \int_{L/n}^{\pi}\varphi(t)^{k}\, e^{-\ii nt }\, \dd t\right|\\
& =: |I_1^1|+|I_1^2| = :|I_1^1|+\left|\sum_{k<B(n/L^2)} I_1^{2, k}\right|.
\end{align*}
Clearly, $|I_1^1| \leq B(n/L^2) \cdot L/n$ and using Potter's bounds, 
for any $\alpha'<\alpha$ for $n$ large enough
\begin{equation} \label{eq:I11}
|I_1^1| \leq 
2 n^{\alpha-1} \ell(n)^{-1} L^{-(2\alpha'-1)}.
\end{equation}
Next, similarly to~\cite[Section 3.5]{GL}, note that
\begin{align} \label{eq:I111decomp}
\nonumber I_1^{2, k} 
& =
\frac{1}{2} \Big(\int_{\pi-\pi/n}^{\pi} +\int_{L/n}^{(L+\pi)/n}\Big) 
\varphi(t)^k \, e^{-\ii nt}\, \dd t \\
\nonumber &\quad + \frac{1}{2} 
\int_{(L+\pi)/n}^\pi 
\left(\varphi(t)^k-\varphi(t-\pi/n)^k\right)\, e^{-\ii nt}\, \dd t \\
& =:J_1^k+J_2^k.
\end{align}
Since, $|J_1^k| \leq \pi/n$, for any $\alpha'<\alpha$ for large $n$,
\begin{equation} \label{eq:J1}
\left|\sum_{k< B(n/L^2)} J_1^{k}\right| \leq 
B(n/L^2) \frac{ \pi}{n} \leq 2  n^{\alpha-1} \ell(n)^{-1} L^{- 2 \alpha'}.
\end{equation}
Using Lemma \ref{lemma:phi-prop} (iii)
\begin{align*}
\left|\varphi(t)^k -\varphi(t-\pi/n)^k\right|
& \le \left|\varphi(t) - \varphi(t-\pi/n)\right|  
\sum_{j=0}^{k-1} |\varphi(t)^j| \, 
|\varphi(t-\pi/n)^{k-j-1}|\\
&\leq 2 \nu_3 \pi^\alpha \, n^{-\alpha} \ell(n) k \,
( |\varphi(t-\pi/n)^{k-1}| + |\varphi(t)^{k-1}|).
\end{align*}
Thus,
\begin{equation} \label{eq:J2-1}
\left|\sum_{k<B(n/L^2)} J_2^{k}\right|
\leq  C n^{-\alpha} \ell(n) 
\sum_{k=0}^{B(n/L^2)} k \int_{L/n}^\pi |\varphi(t)|^k\, \dd t.
\end{equation}
Recall that  $\lim_{k\to\infty}\frac{k\ell(A_k)}{(A_k)^{\alpha}}=1$.
Using Lemma \ref{lemma:phi-prop} (i), change of variables $y\to tA_k$,
and Potter's bound we obtain
\begin{equation} \label{eq:int-bound}
\begin{split}
\int_{L/n}^\pi |\varphi(t)|^k \, \dd t
&\leq \int_{L/n}^\pi e^{- \nu_{1} k   t^{\alpha}\ell(1/t)} \, \dd t \\
& \le \frac{1}{A_k} 
\int_{LA_k/n}^{\pi A_k} e^{- \nu_1 y^\alpha\,k\, A_k^{-\alpha} \ell(A_k/y)}
\, \dd y \\
& \le \frac{1}{A_k} \int_{LA_k/n}^{\pi A_k}
e^{- \frac{\nu_1}{2} y^\alpha \ell(A_k / y)\ell(A_k)^{-1}}  \, \dd y  \\
& \le \frac{1}{A_k}\int_0^\infty 
e^{- C (y^{\alpha-\delta} + y^{\alpha+\delta})} \, \dd y 
 \le \frac{C}{A_k},
\end{split}
\end{equation}
for any $\delta>0$ and some $C>0$.
Recall \eqref{eq:AB}.
Substituting the bound \eqref{eq:int-bound} into \eqref{eq:J2-1},
using Karamata's theorem and that $\alpha>1/2$, we have
\begin{equation} \label{eq:J2-2}
\begin{split}
\left|\sum_{k=0}^{B(n/L^2)} J_2^k \right| & 
\leq C n^{-\alpha} \ell(n) \sum_{k=0}^{B(n/L^2)}\frac{k}{A_k} 
\le C  n^{-\alpha}  \ell(n) 
\frac{B(n/L^2)^{2-\frac{1}{\alpha}}}{\ell_1(B(n/L^2))} \\
& \le C  \frac{n^{\alpha -1}}{\ell(n)} L^{2 - 4 \alpha'},
\end{split}
\end{equation}
with $\alpha' \in (1/2, \alpha)$.

It is worth to note that this is the only part in the proof
where we use that $\alpha > 1/2$. Seemingly, in 
\eqref{eq:I11} we also use this fact, but in that argument
we can enlarge the power of $L$ in $B(n/L^2)$ to work for 
smaller $\alpha$.

Putting \eqref{eq:J1} and \eqref{eq:J2-2} together, recalling that 
$\alpha'<\alpha \in (1/2, 1)$
\begin{align*}
| I_1^2 | = 
\left|\sum_{k<B(n/L^2)} I_1^{2,k}\right|\leq  
C n^{\alpha -1} \ell(n)^{-1} L^{-2\alpha'},
\end{align*}
which combined with \eqref{eq:I11} implies that for any $\alpha' < \alpha$
\begin{equation} \label{eq:I1-bound}
| I_1 | \leq C n^{\alpha -1} \ell(n)^{-1} L^{1-2\alpha'}.
\end{equation}

To finish the proof we have to show that 
\begin{equation} \label{eq:g-int}
\int_0^\infty \sup_{\lambda \in (c^{-1}, 1]} g_\lambda(y) y^{-\alpha} \, \dd y 
< \infty.
\end{equation}
This follows from Theorem 1 by Bingham \cite{Bingham2} (see also
Lemma 2 in \cite{KT}).
By \eqref{eq:g-int} we have
\[
\lim_{L \to \infty} 
\left( \int_0^{L^{-1}} + \int_{L^2}^\infty \right) 
g_{\gamma(B(n) x^{-\alpha})}(x) x^{-\alpha} \, \dd x = 0.
\]
Letting $L\to \infty$ we see that the latter limit together with \eqref{eq:main}, 
\eqref{eq:k22}, and \eqref{eq:I1-bound} imply the statement.
\qed
%\end{proof}

%\begin{proof}
\noindent \textit{Proof of Lemma \ref{lemma-riemsum}} \
With the same notation as in Theorem~\ref{thm:local-sst}, we write
\begin{align*}
& \frac{1}{\pi} \sum_{k=B(n/L^2)}^{B(nL)}  
\Re \int_0^{\pi} \varphi(t)^{k} \, e^{-\ii nt}\, \dd t
 =   \sum_{k=B(n/L^2)}^{B(nL)} \P(S_k=n) \\
& =  \sum_{k=B(n/L^2)}^{B(nL)} \frac{g_{\gamma_k}(n/A_k)}{A_k}
+ \sum_{k=B(n/L^2)}^{B(nL)} \frac{1}{A_k} [ A_k \P(S_k=n)-g_{\gamma_k}(n/A_k) ].
\end{align*}
By Theorem~\ref{thm:local-sst}, recalling that $C_n \equiv 0$ in our case,
for any $\varepsilon > 0$, for $n$ large enough, for all $k\ge B(n/L^2)$ we have
\[
| {A_k} \P(S_k=n)-g_{\gamma_k}(n/A_k)|< \varepsilon.
\]
Hence, using \eqref{eq:An}, Karamata's theorem and Potter's bound, 
for any $\alpha'<\alpha$, similarly as in \eqref{eq:k22}
\begin{equation} \label{eq:s-gamma}
\begin{split}
& \sum_{k=B(n/L^2)}^{B(nL)} 
\frac{1}{A_k} |A_k \P(S_k=n)-g_{\gamma_k}(n/A_k)|
\leq \sum_{k=B(n/L^2)}^{\infty} \frac{\varepsilon}{ A_k} \\
& \leq \frac{2 \alpha \varepsilon}{1- \alpha} n^{\alpha-1}\ell(n)^{-1} \ L^{2-2\alpha'},
\end{split}
\end{equation}
where in the last inequality we also used the inverse relation
$A(B(n)) \sim n$ in \eqref{eq:AB}.
For $n$ large enough and $L$ fixed, we can take $\varepsilon$ so small that
\begin{equation} \label{eq:LLT-1}
\sum_{k = B(n/L^2)}^{B(nL)} \frac{1}{A_k} 
|A_k \P(S_k=n)-g_{\gamma_k}(n/A_k)| \leq  2^{-1} \, n^{\alpha-1}\ell(n)^{-1} L^{-1}.
\end{equation}

Next, we write  $\sum_{k=B(n/L^2)}^{B(nL)}  A_k^{-1} g_{\gamma_k}(n/A_k)$ as a Riemann sum
proceeding as in~\cite[Lemma 2.2.1]{GL} (see also~\cite[Proof of Th. 8.6.6]{BGT}).
More precisely, set $x_k= k\frac{\ell(n)}{n^\alpha}$.  By definition, $A_k$ is the 
asymptotic
inverse of $n \to \frac{n^\alpha}{\ell(n)} = \frac{k}{x_k}$.
Thus
\begin{equation} \label{eq:xk-range}
L^{-2\alpha -\delta} \leq  B(n/L^2) \frac{\ell(n)}{n^\alpha}\le 
x_k\le B(nL)  \frac{\ell(n)}{n^\alpha}
\leq L^{\alpha + \delta}
\end{equation}
with $\delta>0$ arbitrarily small.
Using the uniform convergence theorem and the inverse relation $B(A_n) \sim A(B(n)) \sim 
n$
(as in~\cite[Proof of Th. 8.6.6]{BGT}), 
we have $x_k^{-1/\alpha}\sim \frac{n}{A_k}$ as $k, n\to\infty$, uniformly in the relevant
range of $k, n$. By \eqref{eq:xk-range} this is equivalent to
\begin{equation} \label{eq:xk-Ak}
\lim_{n \to \infty} 
\sup_{B(n/L^2) \leq k \leq B(nL)} \left| x_k^{-1/\alpha} - \frac{n}{A_k} \right| = 0.
\end{equation}
Since $x_{k+1}-x_k=\frac{\ell(n)}{n^\alpha}$ and $k = B(n) x_k$ 
\begin{align*}
& \sum_{k=B(n/L^2)}^{B(nL)}  
\frac{g_{\gamma_k}(n/A_k)}{A_k}
 =\frac{n^\alpha}{n \ell(n)} \sum_{k=B(n/L^2)}^{B(nL)} 
\frac{n}{A_k} g_{\gamma_k}(n/A_k) \frac{\ell(n)}{n^\alpha}\\
& \sim \frac{n^{\alpha-1}}{\ell(n)} 
\sum_{L^{-2\alpha}<x_k<L^\alpha} x_k^{-1/\alpha}
g_{\gamma(x_k B(n))}(x_k^{-1/\alpha})\left( x_{k+1}-x_k\right),
\end{align*}
where in the last line we have used that
by \eqref{eq:xk-Ak} and by \eqref{eq:g'bound}
we have as $n \to \infty$
\[
\sup_{B(n/L^2) \leq k \leq B(nL) }
|g_{\gamma(x_k B(n))} (n/A_k))-g_{\gamma(x_k B(n))}(x_k^{-1/\alpha})|\to 0.
\]
To finish the proof it is enough to show that
\begin{equation} \label{eq:fn}
f_n(x) := x^{-1/\alpha} g_{\gamma(x B(n))}(x^{-1/\alpha})
\end{equation}
is uniformly Lipschitz on $[L^{-2\alpha}, L^\alpha]$. Indeed, for 
uniformly Lipschitz $f_n$ the convergence of the Riemann sums follows, i.e.
\[
\begin{split}
& \sum_{k=B(n/L^2)}^{B(nL)} x_k^{-\frac{1}{\alpha}}
g_{\gamma(x_k B(n))}(x_k^{-\frac{1}{\alpha}})\left( x_{k+1}-x_k\right) 
\\ & 
\sim
\int_{L^{-2\alpha}}^{L^\alpha} x^{-\frac{1}{\alpha}} 
g_{\gamma(B(n) x)} \big(x^{-\frac{1}{\alpha}} \big) \dd x \\
& = \alpha \int_{L^{-1}}^{L^2} 
g_{\gamma(B(n) y^{-\alpha})}(y) y^{-\alpha} \dd y.
\end{split}
\]
This together with \eqref{eq:LLT-1} implies the statement.
\smallskip

Therefore, it only remains to show that the sequence $(f_n)$ in \eqref{eq:fn} is 
uniformly Lipschitz on any compact subset of $(0,\infty)$. Recall \eqref{eq:def-gamma}
and for $x > 0$ large set $b(x)$ to be the unique index for which
$k_{b(x) - 1} < x \leq k_{b(x)}$. Then $\gamma_x = x/k_{b(x)}$. For some
large $M$ fix the interval
$I = [c^{-M}, c^{M}]$, and let $h > 0$ be small enough such that $1 + h c^M \leq 
\sqrt{c}$.
Then $B(n) (x+h) = B(n) x (1 + h/x) \leq B(n) x \sqrt{c}$, which implies that
$b(B(n) (x + h))$ is either $b(B(n)x)$, or $b(B(n)x) +1$. 
Both cases can be handled similarly, we consider only the former. Then
\[
\gamma(B(n) (x+h)) = \frac{B(n) (x+h)}{k_{b(B(n)x)}} =
\gamma(B(n) x ) + h \frac{B(n)}{k_{b(B(n) x)}}.
\]
The factor of $h$ is $O(1)$ since
\[
\frac{B(n)}{k_{b(B(n) x)}} =  x^{-1} \, \frac{B(n)x}{k_{b(B(n) x)}},
\]
where $x \in I$ and the second factor is less than, or equal to 1.
Thus by Lemma~\ref{lemma:g-dens} the result follows. 
\qed
%\end{proof}

Before proceeding to the proof of Theorem \ref{thm:SRTEr}, we 
prove Lemma \ref{lemma-sumnonar}.

\noindent \textit{Proof of Lemma~\ref{lemma-sumnonar}}\
Recall that $h \in (0,1/2)$ is fixed.
Proceeding as in the proof of  Lemma \ref{lemma-riemsum}, the conclusion follows 
once we show that as $y\to\infty$,
\begin{equation} \label{eq-toshow}
y^{1-\alpha}\ell(y)\sum_{k=B(y/L^2)}^{B(yL)} 
(I_{k,y}-2h) \frac{g_{\gamma_k}(y/A_k)}{A_k}\to 0.
\end{equation}
Let $R_a$ denote the irrational rotation with $-a$, i.e.
\[
R_a: \R / \Z \to [0,1), \ y \mapsto y - a \quad \text{mod} \ 1.
\]
Note that 
\[
I_{k,y}=1_{[0, h) \cup (1-h, 1)} \circ R_a^k (y).
\]
Let $\eps>0$ be arbitrary. Because of the unique ergodicity property of $R_a$
(see, for instance,~\cite[Section 5]{Oxtoby}), 
there exists $N=N_\eps$ such that for any $n \geq N$
\begin{equation} \label{eq-uniferg}
\sup_{m, y} \left| \frac{\sum_{k=m+1}^{m+n} I_{k,y}}{n} - 2h \right|
=\sup_{m, y} \left| \frac{1}{n} \vert\{ 1 \leq j \leq n:
R_a^{j+m}(y) \in [0, h] \cup (1-h,1) \} 
\vert - 2h \right| \le \eps.
\end{equation}

Divide the interval $[B(y/L^2), B(yL)]$ into blocks $[k_j, k_{j+1})$  of
length $N$. Let 
\[
n_y = \left\lfloor \frac{\lfloor B(yL) \rfloor - \lceil B(y/L^2) \rceil}{N} 
\right\rfloor
\]
and define
\begin{equation} \label{eq-bocks}
k_{j}= \lceil B(y/L^2) \rceil + j N, \quad j = 0, 1,2,\ldots, n_{y} -1,
\quad k_{n_y} = \lfloor B(yL) \rfloor +1.
\end{equation}
Then each block $[k_j, k_{j+1})$ has length $N$, except the last one,
which might be longer, but at most of size $2N$.

By Lemma~\ref{lemma:g-dens},
\begin{equation} \label{eq-ggamm}
\lim_{y \to \infty} \sup_{B(y/L^2) \leq k \leq B(yL)}
\left|
g_{\gamma_{k+1}} (y/A_{k+1}) -
g_{\gamma_k}(y/A_k) \right| = 0.
\end{equation}
Thus, for arbitrarily small $\eps_0$ there exists $y$ sufficiently large,
such that for any $j = 0,1,\ldots$, $n_y - 1$
\begin{equation*} \label{eq-in1}
| g_{\gamma_k}(y/A_k)) - g_{\gamma_{k_j}}(y/A_{k_j})| \leq \eps_0
\quad \mbox{ for every } k\in\{k_j,\ldots,k_{j+1}\}.
\end{equation*}
Next, using properties of slowly varying function, we have that
for arbitrarily small $\eps_1$, there exists $y$ large enough,
such that for any $j = 0,1,\ldots, n_y - 1$
\[
\frac{1}{A_{k_j}} -  \frac{1}{A_k} 
\le \left(\eps_1 +\frac{N}{\alpha k_j}\right)
\frac{1}{A_{k_j}} \mbox{ for every } k \in \{k_j,\ldots,k_{j+1}\}.
\]
As $N$ is fixed and $y \to \infty$, for any $\varepsilon_2 > 0$
there exists $y$ large enough such that 
$N/k_j \leq \varepsilon_2$.
Therefore, with 
$\varepsilon_3 = \varepsilon_0 + \varepsilon_1 + \varepsilon_2$,
for every $k \in \{k_j,\ldots,k_{j+1}-1\}$,
\begin{equation}
\label{eq-in2}
\left| 
\frac{g_{\gamma_k}(y/A_k)}{A_k} - 
\frac{g_{\gamma_{k_j}}(y/A_{k_j})}{A_{k_j}} \right|
\leq \frac{\varepsilon_3}{A_{k_j}}.
\end{equation}
Now,
\begin{align*}
& \sum_{k=B(y/L^2)}^{B(yL)} (I_{k,y}-2h) 
\frac{g_{\gamma_k}(y/A_k)}{A_k} \\
& = \sum_{j=0}^{n_y - 1} 
\sum_{k=k_j}^{k_{j+1}-1} (I_{k,y} -2 h) 
\frac{g_{\gamma_{k_j}}(y/A_{k_j})}{A_{k_j}} 
+ \sum_{j=0}^{n_y - 1} 
\sum_{k=k_j}^{k_{j+1}-1} 2 h 
\left(
\frac{g_{\gamma_{k_j}}(y/A_{k_j})}{A_{k_j}} -
\frac{g_{\gamma_k}(y/A_k)}{A_k} 
\right) \\
& = : S_1 + S_2.
\end{align*}

Using~\eqref{eq-uniferg}, \eqref{eq-in2}, and that 
$A_{k_j} \sim A_k$ uniformly in $k \in \{ k_j, \ldots, k_{j+1} \}$,
we have 
\begin{equation} \label{eq:S1-bound}
\begin{split} 
|S_1| & \le \eps \sum_{j=0}^{n_y - 1} 
(k_{j+1}-k_j) \frac{g_{\gamma_{k_j}}(y/A_{k_j})}{A_{k_j}}\\
& \leq \eps \sum_{j=0}^{n_y -1} \sum_{k=k_j}^{k_{j+1}-1} 
\frac{g_{\gamma_k}(y/A_k) + \varepsilon_3}{A_{k}}. 
\end{split}
\end{equation}
While for $S_2$ by \eqref{eq-in2}  we obtain
\begin{equation} \label{eq:S2-bound}
| S_2 | \leq 
\sum_{k=B(y/L^2)}^{B(yL)} 2 h \frac{\varepsilon_3}{A_k}.
\end{equation}
Since $\varepsilon$ and $\varepsilon_3$ are as small as  we want,
\eqref{eq:S1-bound} and \eqref{eq:S2-bound} imply
\[
\lim_{y \to \infty} \frac{ 
\sum_{k=B(y/L^2)}^{B(yL)} (I_{k,y}-2h) 
A_k^{-1}  {g_{\gamma_k}(y/A_k)}}
{ \sum_{k=B(y/L^2)}^{B(yL)} 
A_k^{-1} {g_{\gamma_k}(y/A_k)}} = 0,
\]
thus \eqref{eq-toshow} follows.
\qed

The proof below  goes by and large as the proof of 
Theorem \ref{thm:SRTGL}. In the lattice case we combine Theorem \ref{thm:SRTGL}
with Lemma~\ref{lemma-sumnonar}.
In the nonlattice case, we use Theorem \ref{thm:llt-stone} and
the inversion formula~\eqref{eq-inversion}
used in the proof of Theorem~\ref{thm:llt-stone}, along with 
the approximation equations~\eqref{eq-SnUanyn} and~\eqref{eq:Sn-U}.
At some extent, our strategy  resembles the one in~\cite{Erickson70}
(suitable for the usual stable/regular variation setting),
but we do not use it a such.

%\begin{proof}
\noindent \textit{Proof of Theorem \ref{thm:SRTEr}} \

\textbf{Nonarithmetic, lattice case.}
We continue from~\eqref{eq:u-nonar} and split the sum into 
$I_1, I_2, I_3$ exactly as in the proof
of Theorem \ref{thm:SRTGL}. The terms $I_1$ and $I_3$ are negligible,
which follows exactly as in the proof of Theorem \ref{thm:SRTGL}. The 
asymptotic of the term $I_2$, which gives the exact term, follows from  
Lemma~\ref{lemma-sumnonar}.

\textbf{Nonlattice case.}
We start from 
\[
\begin{split}
& U(y+h)-U(y-h) =\sum_{k=0}^\infty \P ( S_k  \in (y-h, y+h])\\
&  =
\left( \sum_{k < B(y/L^2)} + \sum_{k=B(y/L^2)}^{B(yL)} + \sum_{k > B(yL)}
\right)\P ( S_k  \in (y-h, y+h] ) \\
& = : E_1 + E_2 + E_3.
\end{split}
\]
For $E_2$ and $E_3$, using~\eqref{eq:Sn-U}, \eqref{eq:s-gamma} (choosing $\eps$ small 
enough) 
and~\eqref{eq-inversion},
\begin{align*}
& E_2+E_3 =\left(\sum_{k=B(y/L^2)}^{B(yL)} + 
\sum_{k > B(yL)}\right)\P ( S_k + Y \in (y- h, y+h] ) 
%\\ & \quad 
+ O \left( \frac{y^{\alpha -1}}{\ell(y) L} \right) \\
& = \left( \sum_{k=B(y/L^2)}^{B(yL)} + \sum_{k > B(yL)}\right)
\frac{h}{\pi} \int_{-T}^T \frac{\sin th}{th} e^{-\ii ty} 
\varphi(t)^k \left(1- \frac{|t|}{T} \right) \dd t 
%\\ & \quad 
+   O \left( \frac{y^{\alpha -1}}{\ell(y) L} \right) \\
& = : I_2 + I_3 + O\left( L^{-1} y^{\alpha -1} \ell(y)^{-1} \right) .
\end{align*}
The terms $I_2$ and $I_3$ can be treated as their analogues in the proof of Theorem 
\ref{thm:SRTGL}  just writing
$x$ instead of $n$ and $T$ instead of $\pi$. We skip the details, and continue with 
$E_1$. 

Using \eqref{eq-SnUanyn} with $h^+=(1+\delta)h$, for $\delta>0$ and also 
\eqref{eq-inversion},
\begin{align*}
E_1 &\le 
\frac{1}{\P (|Y| \leq \delta h )}
\sum_{k < B(y/L^2)}\P ( S_k + Y \in (y- h^+, y+h^+] )\\
&=\frac{h^+}{\P (|Y| \leq \delta h ) \pi} 
\sum_{k < B(y/L^2)}\int_{-T}^T \frac{\sin th^+}{th^+} e^{-\ii ty}
\varphi(t)^k \left( 1 - \frac{|t|}{T} \right)  \dd t \\
& =: \frac{h^+}{\P (|Y| \leq \delta h ) \pi} I_1.
\end{align*}
To ease notation put
\[
\beta(t) = \frac{\sin (th^+)}{th^+} 
\left( 1- \frac{|t|}{T} \right).
\]
Then $\beta$ is uniformly Lipschitz on $[-T,T]$, thus there is a constant $C$ 
for which
\begin{equation} \label{eq:beta}
|\beta( t ) - \beta (t + s) | \leq C \, s \quad \text{for any } t, t+s \in [-T,T].
\end{equation}
Splitting $I_1$ further as in the arithmetic case, let
\[
\begin{split}
I_1 & = \sum_{k < B(y/L^2)} 
\int_{-T}^T \beta(t) \varphi(t)^k e^{-\ii t y} \, \dd t \\
& = \sum_{k < B(y/L^2)} \left( \int_{|t| \leq L/y} + 
\int_{|t| \in (L/y, T)} \right)
\beta(t) \varphi(t)^k e^{-\ii t y} \, \dd t =: I_1^1 + I_1^2.
\end{split}
\]
As in \eqref{eq:I11} we obtain that for any $\alpha' < \alpha$ for $x$ large enough
\begin{equation} \label{eq:I11-c}
|I_1^1 | \leq 2 y^{\alpha -1} \ell(y)^{-1} L^{-(2 \alpha' -1)}.
\end{equation}
To estimate $I_1^2$, as in the arithmetic case (see also the proof of (5.11) in
\cite{Erickson70}) write
\[
\begin{split}
& \int_{L/y}^T  \beta(t) \varphi(t)^k e^{- \ii t y}  \, \dd t
=
\frac{1}{2} \left( \int_{T - \pi/y}^T + \int_{L/y}^{(L+\pi)/y} \right)
\beta(t) \varphi(t)^k e^{-\ii ty} \, \dd t  \\
& \quad + 
\frac{1}{2} \int_{L/y}^{T - \pi/y}e^{-\ii t y}
\left[ \beta(t) \varphi(t)^k - 
\beta(t+ \pi/y) \varphi(t+ \pi/y)^k \right] \dd t.
\end{split}
\]
Using \eqref{eq:beta} and Lemma \ref{lemma:phi-prop} (iii), as in the
arithmetic case we obtain that for any $\alpha' < \alpha$ for $x$ large enough
\[
|I_1^2| \leq C y^{\alpha - 1} \ell(y)^{-1} L^{-2 \alpha'}.
\]
Combining with \eqref{eq:I11-c} we have
\[
\lim_{L \to \infty} \limsup_{y \to \infty} |I_1| 
\frac{y^{1-\alpha}}{\ell(y)} = 0,
\]
proving the statement.
\qed
%\end{proof}

\subsection{Renewal function asymptotics}

%\begin{proof}
\noindent \textit{Proof of Theorem \ref{thm:Uasy-sem}} \
We first assume that $X$ is integer valued with span 1.
Let $L > 1$ be a fixed large number. Using \eqref{eq:un} 
\[
\begin{split}
\P ( S_k \leq n ) & = 
\sum_{\ell=0}^n \P ( S_k = \ell) \\
& = \frac{1}{2 \pi} \int_{-\pi}^\pi 
\sum_{\ell = 0}^n e^{-\ii \ell t} \varphi(t)^k \, \dd t 
\\
& = \frac{1}{2 \pi} \int_{-\pi}^\pi 
\frac{1 - e^{- \ii (n+1) t}}{1- e^{-\ii t}} 
\varphi(t)^k \, \dd t,
\end{split}
\] 
thus
\begin{equation*} % \label{eq:Uform}
U(n) = \sum_{k = 0}^\infty \P( S_k \leq n ) = 
\frac{1}{2 \pi} \int_{-\pi}^\pi 
\frac{1 - e^{- \ii (n+1) t}}{1- e^{-\ii t}}  \frac{1}{1- \varphi(t)} \dd t.
\end{equation*}
First we show that the main contribution in $U(n)$ comes from the integral on
$[(nL)^{-1}, L/n]$. Indeed, for $|t| \geq L/n$, using Lemma \ref{lemma:phi-prop}
(ii)
\begin{equation} \label{eq:larget}
\begin{split}
\left| \int_{\frac{L}{n} \leq |t| \leq \pi} 
\frac{1 - e^{- \ii (n+1) t}}{1- e^{-\ii t}}  \frac{1}{1- \varphi(t)} \dd t
\right| 
& \leq C \int_{L/n}^\pi \frac{1}{t} t^{-\alpha} \ell(1/t)^{-1} \dd t \\
& \leq C \frac{n^\alpha}{\ell(n)} L^{-\alpha}, 
\end{split}
\end{equation}
while for $|t| \leq 1/(nL)$
\begin{equation} \label{eq:smallt}
\begin{split}
\left| \int_{|t| \leq (nL)^{-1}} 
\frac{1 - e^{- \ii (n+1) t}}{1- e^{-\ii t}}  \frac{1}{1- \varphi(t)} \dd t
\right|
%\\ & 
& \leq C \int_0^{(nL)^{-1}} n  t^{-\alpha} \ell(1/t)^{-1} \dd t \\
& \leq C \frac{n^\alpha}{\ell(n)} L^{\alpha - 1}. 
\end{split}
\end{equation}
Therefore we need to consider the integral on $[(nL)^{-1}, L/n]$. Write
\begin{equation*} % \label{eq:UI}
\begin{split}
&\int_{\frac{1}{Ln} \leq |t| \leq \frac{L}{n}}
\frac{1 - e^{- \ii (n+1) t}}{1- e^{-\ii t}}
\left[ 
\sum_{k=0}^{ B(n/\sqrt{L})} + 
\sum_{k= B(n/\sqrt{L})}^{B(nL^2)} + \sum_{ k > B(nL^2)}
\right]
\varphi(t)^k \, \dd t \\
&
= : I_1 + I_2 + I_3.
\end{split}
\end{equation*}
The arguments  below are somewhat similar to the ones in the proof of Theorem 
\ref{thm:SRTGL}, but simplified.
For the first term for $n$ large enough
\begin{equation*} % \label{eq:U-I1}
|I_1 | \leq C \int_{1/(Ln)}^{L/n} \frac{1}{t} B(n/\sqrt{L}) \dd t
\leq C B(n/\sqrt{L}) \log L
\leq C \frac{n^\alpha}{\ell(n)} L^{-\alpha/2} \log L,
\end{equation*}
while for the third using Lemma \ref{lemma:phi-prop} (i) and (ii) 
and the uniform convergence
theorem for slowly varying functions we obtain for $n$ large enough
\begin{equation*} % \label{eq:U-I3}
\begin{split}
| I_3 | & \leq C \int_{ \frac{1}{Ln}}^{\frac{L}{n}} \frac{1}{t} 
e^{- \nu_1 t^\alpha \ell(1/t) B(nL^2)} t^{-\alpha} \ell(1/t)^{-1} \dd t \\
& \leq  C \frac{1}{\ell(n)} \int_{ \frac{1}{Ln}}^{\frac{L}{n}}
 t^{-\alpha - 1}
e^{-\frac{\nu_1}{2} (nL^2t)^\alpha} \dd t \\
& \leq C \frac{1}{\ell(n)} \int_{\frac{1}{Ln}}^1 
t^{-\alpha - 1} \, \dd t \, 
e^{-\frac{\nu_1}{2} L^\alpha} \\
& \leq  C \frac{n^\alpha}{\ell(n)} L^{\alpha} 
e^{-\frac{\nu_1}{2} L^\alpha}.
\end{split}
\end{equation*}

It remains to estimate $I_2$. 
For $ B(n/\sqrt{L}) \leq k \leq B(nL^2)$ uniformly in $k$ as
$n \to \infty$ we have
\[
\begin{split}
\int_{\frac{1}{Ln} \leq |t| \leq \frac{L}{n}}
\frac{1 - e^{- \ii (n+1) t}}{1- e^{-\ii t}}
\varphi(t)^k \, \dd t 
& \sim \int_{\frac{1}{Ln} \leq |t| \leq \frac{L}{n}}
\frac{1 - e^{- \ii (n+1) t}}{\ii t}
\varphi(t)^k \, \dd t 
%\\ & 
=: I_2^k.
\end{split}
\]
Changing variables and using the usual inversion formula
for characteristic functions
\begin{equation*} % \label{eq:U-I2-1}
\begin{split}
I_2^k & =  
\int_{\frac{A_k}{Ln} \leq |u| \leq \frac{LA_k}{n}}
\frac{1 - e^{- \ii \frac{n+1}{A_k} u }}{\ii u}
\varphi(u/A_k)^k \, \dd u \\
& = \int_{-\infty}^\infty
\frac{1 - e^{- \ii \frac{n+1}{A_k} u}}{\ii u} 
\psi_{\gamma_k}(u) \, \dd u \\
& \quad
- \left( \int_{|u| \leq \frac{A_k}{Ln}} + 
\int_{|u| \geq \frac{L A_k}{n}} \right)
\frac{1 - e^{- \ii \frac{n+1}{A_k} u}}{\ii u} \psi_{\gamma_k}(u) \dd u \\
& \quad + \int_{\frac{A_k}{Ln} \leq |u| \leq \frac{LA_k}{n}}
\frac{1 - e^{- \ii \frac{n+1}{A_k} u}}{\ii u}
\left(  \varphi(u/A_k)^k - \psi_{\gamma_k}(u)  \right)
\dd u \\
& = G_{\gamma_k} \left( \frac{n+1}{A_k} \right) 
- J_1^k - J_2^k + J_3^k. 
\end{split}
\end{equation*}
Since $A_k/n$ ranges from $L^{-1/2}$ to $L^2$, 
it can be shown as in \eqref{eq:xk-range} that 
for any fixed $L$ the interval 
$[A_k / (Ln), L A_k / n]$ for $B(n/\sqrt{L}) \leq k \leq B(nL^2)$ is 
bounded away both from 0 and from $\infty$ uniformly in $k$. 
The merging relation implies that \eqref{eq:mergechar} holds, therefore
\begin{equation} \label{eq:U-J3}
\lim_{n \to \infty} \sup_{B(n/\sqrt{L}) \leq k \leq B(nL^2)} J_3^k = 0.
\end{equation}
Since $G_\gamma$ has a density $g_\gamma$, the characteristic function
$\psi_\gamma$ is integrable, and as $n \to \infty$
\[
\frac{L A(B(n/\sqrt{L}))}{n} \sim \sqrt{L},
\]
which tends to $\infty$ as $L \to \infty$, we have that
\begin{equation} \label{eq:U-J2}
\lim_{L \to \infty}
\limsup_{n \to \infty} \sup_{B(n/\sqrt{L}) \leq k \leq B(nL^2)} J_2^k = 0.
\end{equation}
Finally, for $J_1^k$ note that for $L$ large 
\[
\left| 1 - e^{- \ii \frac{n+1}{A_k} u} \right| \leq 
2 \frac{n+1}{A_k} |u|
\]
whenever $|u| \leq A_k/(Ln)$. Thus
\begin{equation} \label{eq:U-J1-1}
| J_1^k | \leq 2 \frac{n+1}{A_k} \frac{A_k}{Ln} \leq \frac{3}{L}.
\end{equation}
Putting together \eqref{eq:U-J3}, \eqref{eq:U-J2}, and \eqref{eq:U-J1-1},
we obtain that for any $\varepsilon > 0$ we can choose $L$ large enough such
that for $n$ large enough
\begin{equation} \label{eq:U-Ik}
\sup_{B(n/\sqrt{L}) \leq k \leq B(nL^2)} 
\left|I_2^k - G_{\gamma_k}\left( \frac{n+1}{A_k} \right) \right|
\leq \varepsilon.
\end{equation} 

Finally, as in the proof of Lemma \ref{lemma-riemsum} we obtain that
\[
\begin{split}
\sum_{k=B(n/\sqrt{L})}^{B(nL^2)} 
G_{\gamma_k} \left( \frac{n+1}{A_k} \right) 
%\\ & 
& \sim 
\frac{n^\alpha}{\ell(n)} \int_{L^{-\alpha/2}}^{L^{2\alpha}} 
G_{\gamma(B(n) x)}(x^{-1/\alpha}) \, \dd x \\
& = \frac{n^\alpha}{\ell(n)} \alpha \int_{L^{-2}}^{\sqrt{L}}
G_{\gamma(B(n) u^{-\alpha})} ( u) u^{-\alpha -1} \, \dd u.
\end{split}
\]
This completes the proof in the arithmetic case.
\bigskip

The nonarithmetic case is similar.
The only difference in this case is the expression of the inversion formula.
As in \eqref{eq-inversion} (with $Y$ defined in \eqref{eq:def-Y}),
\[
\P ( S_k +Y\leq y )  
= \frac{1}{2 \pi} \int_{-T}^T \frac{1 - e^{- \ii y t}}{\ii t} 
\varphi(t)^k (1- |t|/T) \, \dd t
\] 
which gives
\begin{equation*} 
\sum_{k = 0}^\infty \P ( S_k +Y\leq y ) = 
\frac{1}{2 \pi} \int_{-T}^T 
\frac{1 - e^{- \ii y t}}{\ii t}  \frac{1}{1- \varphi(t)}  
(1- |t|/T) \, \dd t.
\end{equation*}
Proceeding as in the argument above in the integer valued case 
with $y$ instead of $n$, $T$ instead of $\pi$ 
and  $\ii t$ instead of $1- e^{-\ii t}$, we obtain
the analogues of \eqref{eq:larget}, \eqref{eq:smallt}, and 
\eqref{eq:U-Ik}.
Putting these together,
\begin{equation*}
\lim_{y\to\infty}
\Big| y^{-\alpha}\ell(y) \sum_{k = 0}^\infty \P ( S_k +Y\leq y ) 
- \alpha \int_{0}^{\infty}
G_{\gamma(B(y) x^{-\alpha})} ( x) x^{-\alpha -1} 
\, \dd x \Big| =0.
\end{equation*}
To complete, we need to get rid of $Y$ in the above equation. 
This can be done using \eqref{eq:Sn-U}.
\qed
%\end{proof}

\bigskip

\noindent \textbf{Acknowledgement.}
We are thankful to Vilmos Totik for showing us a simpler 
proof of the strict positivity
of the real part in Theorem \ref{thm:phi-asy-gen} and
to the anonymous referee for the remarks and suggestions, in
particular for pointing out reference \cite{Uchi}.

%\bibliographystyle{abbrv}
%\bibliography{SRT}

\end{document}